\def\smallddots{\mathinner{\raise7pt\hbox{.}\raise4pt\hbox{.}\raise1pt\hbox{.}}} 
\def\smallsdots{\mathinner{\raise1pt\hbox{.}\raise4pt\hbox{.}\raise7pt\hbox{.}}}
\DeclareMathOperator{\diag}{diag}
\DeclareMathOperator{\prob}{Probability}
\DeclareMathOperator{\rank}{rank}
\DeclareMathOperator{\nrank}{nrank}
\newtheorem{theorem}{Theorem}[section]
\numberwithin{equation}{section}
\numberwithin{table}{section}
\newtheorem{lemma}{Lemma}[section]
\newtheorem{corollary}{Corollary}[section]
\newtheorem{algorithm}{Algorithm}[section]
\newtheorem{example}{Example}[section]
\newtheorem{definition}{Definition}[section]
\newtheorem{assumption}{Assumption}[section]
\newtheorem{remark}{Remark}[section]
\begin{document}
 
\title{Low Rank Approximation at Sublinear Cost\footnote{Some results of this paper have been presented at INdAM (Istituto Nazionale di Alta Matematica) Meeting "Structured Matrices in Numerical Linear Algebra: Analysis, Algorithms and Applications", Cortona, Italy, September 4-8, 2017, and  at MMMA, Moscow, Russia, August 2019.}
} 
\author{Qi Luan$^{[1],[a]}$, Victor Y. Pan$^{[1, 2],[b]}$, 
John Svadlenka
$^{[1],[c]}$, 
and  Liang Zhao$^{[1, 2],[d]}$
\\
\\
$^{[]}$ Ph.D. Programs in  Computer Science and Mathematics \\
The Graduate Center of the City University of New York \\
New York, NY 10036 USA \\
$^{[2]}$ Department of Computer Science \\
Lehman College of the City University of New York \\
Bronx, NY 10468 USA 
\\
$^{[a]}$ qi\_luan@yahoo.com \\
$^{[b]}$ victor.pan@lehman.cuny.edu \\ 
http://comet.lehman.cuny.edu/vpan/  \\
$^{[c]}$ jsvadlenka@gradcenter.cuny.edu \\ 
$^{[d]}$ Liang.Zhao1@lehman.cuny.edu \\
} 
\date{}
 
\maketitle

 
\begin{abstract}  
Low Rank Approximation {(\em LRA)} of a matrix is a hot research subject, fundamental for Matrix and Tensor Computations and Big Data Mining and Analysis. Computations with low rank matrices can be performed 
 at sublinear cost -- by using much fewer floating-point   operations (flops)
 than an input matrix has entries, but can we  compute  {\em LRA}  at sublinear
 cost?  
 This is routinely done in 
computational practice for a large class of inputs, even though  any sublinear cost LRA algorithm  fails most miserably on  worst case matrices.

To provide insight into this controversy we first accelerate some popular near-optimal
random sketching
 {\em LRA} algorithms -- to run them at sublinear cost. Then  we define two probabilistic structures in the space of input matrices and 
 estimate that the expected spectral and Frobenius  error norms
 for the output {\em LRA} of the accelerated algorithms 
 stay within a reasonable factor from their optima under both models,  and so  these sublinear cost algorithms  only fail
for a very narrow input class.

Our upper estimates for their output accuracy 
are still quite high,  but under some additional semi-heuristic amendments 
 the algorithms have consistently 
output
 accurate {\em LRA} of various synthetic and real-world matrices
 in our numerical tests.
 \end{abstract}
  
\paragraph{\bf Key Words:}
Low-rank approximation,
Sublinear cost,
Random
sketches


\paragraph{\bf 2020 Math. Subject  Classification:} 
65Y20, 65F55, 68Q25, 68W20
 
 
\section{Introduction}\label{sintro}
  

{\bf 1.1. Background for {\em LRA}.}   
Low rank approximation ({\em LRA}) of a matrix
 is a hot research area of Numerical Linear  Algebra  and Computer Science 
 with applications to fundamental matrix and tensor computations
 and  data mining and analysis (see
the  surveys \cite{M11,HMT11,
TYUC17,MT20,
TWa},   
  and pointers to the huge bibliography
  therein). 
 
 The size of the matrices defining 
 Big Data (e.g., unfolding matrices of multidimensional tensors) 
  is frequently so large that only a small fraction of all their entries  fits primary computer memory, although quite typically these 
  matrices admit their {\em LRA}
  \cite{UT19}. One can operate with low rank matrices at {\em  sublinear  computational cost} -- by using much fewer flops  than an input matrix has entries, but can we compute {\em LRA} at sublinear cost? 
   Yes and no.
   
No, because  any  sublinear cost {\em LRA}  {\em fails most miserably} on worst case inputs and even on the matrices of  the small families of Example \ref{exdlt} of our Appendix  \ref{shrdin}. 
    
   Yes, because  Cross-Appro\-xi\-ma\-tion {\em (C-A)}
iterations
(see \cite{OZ18,
 ALS24},
 and the  bibliography therein),
 adapting to {\em LRA} the   Alternating Direction
Implicit {\em (ADI)} method \cite{S16}, have been 
running at sublinear cost and consistently computing close {\em LRA} of a large and important class of matrices.  

Nontrivial  progress  made so far in the analysis of these iterations (see \cite{CD13,
OZ18,CK20,
ALS24,CY25}) only partly 
 explains this 
  ``yes" and ``no"
 coexistence.
\\   
{\bf 1.2. Dual sketching  algorithms.} 
 To provide a new insight into it, we
first recall  {\em sketching {\em LRA} algorithms}
\cite{M11,
HMT11,W14,
TYUC17,MT20,
TWa}. With a high probability   ({\em whp}) they compute
     a near-optimal  rank-$r$ approximation of an 
$m\times n$ matrix $M$ by using auxiliary  matrices
({\em sketches}) $FM$ and/or $MH$  with $k\times m$ and/or $n\times l$
random  {\em sketch matrices} 
$F$ and/or $H$, respectively, for $r\le \min\{k,l\}$, $\max\{k,l\}=r+p$, and an {\em oversampling parameter} $p>0$. 
For small positive integers $k$ and $l$ the algorithms
run at sublinear cost, except for the  computation of the sketches,  and our next goal is to run  that stage at sublinear cost as well.
One can explore a variety of ways towards this goal; to simplify our presentation, we will work with $LRA$ that only uses sketches $MH$, as in 
\cite{HMT11}, and in Sec. \ref{simppre}
 show extension of our results  to   algorithms
using both sketches $FM$ and $MH$.

Maybe the simplest way to our goal is
to {\em randomly sample} $l$  columns of the matrix  $M$ or equivalently  to let $H$ be 
the  $n\times l$ leftmost submatrix 
of a random $n\times n$ permutation matrix
under a fixed randomization model.  Clearly, in this case only $nl$ elements of $M$ are accessed, and
the calculation itself is given "for free".
We can alternatively  compute sketches $MH$ at sublinear cost by choosing
{\em Ultrasparse sketch matrices} $H$.
 Example \ref{exdlt}
tells us that these ways (as well as   any other sublinear way) cannot yield accurate {\em LRA} for all inputs $M$, but in our  
  numerical experiments with  Ultrasparse sketch matrices  we have quite consistently output close {\em LRA}. 

Towards explaining these test  results 
and  constructing a theory,
we introduce a probabilistic structure in the  space of input
matrices $M$, thus defining {\em dual 
sketching} in comparison to customary primal one, where this matrix is fixed.

The output error norm of  sketching algorithms of \cite{HMT11} is essentially defined by the product $V_1^*H$ 
where $V_1$ is the matrix 
of the $r$ top left singular vectors of $M$, associated with its $r$ top (largest)
singular values.
Now
assume 
that $V_1^*$ is the Q factor in QR or QRP factorization of a Gaussian matrix,
 $H$ has orthonormal columns, and $l-r$
 is reasonably large. Then we
 prove that whp such a sketching algorithm outputs	{\em LRA} whose 
 both Frobenius and spectral error norms 
 are within a factor of 
 $\sqrt {1 + 16~n/l}$ from their optima; this is  essentially Thm. \ref{eqtherr} 
-- our {\bf  Main result 1!}

While this choice of probabilistic structure is  most relevant 
to the estimation of the output errors of a sketching algorithm,
its  relevance to   {\em LRA} computation in the  
real world is debatable, as probably any choice of probabilistic structure in the space of input matrices for {\em LRA}.
 
In Sec. \ref{serrrang}
we estimate the errors of {\em LRA}  by means of dual sketching under another
model, possibly more natural:
  we fix a sketch matrix $H$
with orthonormal columns 
and consider an $m \times n$ input matrix $M = A\Sigma B +E$ for  a perturbation matrix  $E$  and for
$A\Sigma B$ being a {\em random pseudo  SVD} of a rank-$r$ matrix of size $m\times n$. Namely, we let $\Sigma$ be an $r\times r$ diagonal matrix
 with $r$ positive diagonal entries
 (as in SVD) and let $A$ and $B$ be  {\em Gaussian} matrices,
 that is, matrices 
 filled with $(m+n)r$ independent 
standard
Gaussian (normal) random variables, 
 rather than with singular 
vectors of SVD.\footnote{By saying {\em ``LRA"} we assume that $r\ll\min\{m,n\}$  and then  motivate our definition of random pseudo SVD by recalling
(e.g., from  
 \cite[Thm. 7.3]{E89} or 
\cite{RV09})
that $\frac{1}{\sqrt k}G$ is 
close to a matrix having orthonormal columns whp for $r\ll k$ and 
a $k\times r$ Gaussian
matrix $G$. See another motivation of independent interest in Remark \ref{reprprp} 
 of Appendix \ref{srndoprpr}.}

We call such a matrix $M$ a perturbation 
 of two-sided factor-Gaussian (with expected rank 
$r$), but most of our study 
(including Thm. \ref{eqtherr} and our main result, stated below)
apply to a more general class of perturbed 
right factor-Gaussian matrices (with expected rank $r$) of the form
$AG+E$ where $G$ is an $r\times n$ Gaussian matrix and  $A$ is an $m \times r$ matrix of full rank $r$.

We need a longer probabilistic 
study but finally prove 
 under a rather mild assumption about the spectrum of the singular values of 
an input matrix $M$
that our sublinear cost sketching algorithm outputs a close
rank-$r$ approximations whp on the defined probability space: namely, our  output error bound  only increases
from a factor of $\sqrt {1 + 16~n/l}$  versus the optimum in Thm. \ref{eqtherr}
 to  a factor of $\sqrt {1 + 100~n/l}$, and this is  essentially Thm. \ref{therrfctr}
-- our \textbf{Main result 2!}

Both Thms.  \ref{eqtherr} and \ref{therrfctr} apply to approximation by matrices  of any fixed rank $r$ and cover  random sketching algorithms that in the case of  proper Ultrasparse sketch matrices $F$ and/or
$H$ use $O((m+n)r^2)$
flops,  running at sublinear cost 
where $r^2\ll\max\{m,n\}$ but involving {\em sublinear memory space}, that is,  much fewer than $mn$ entries of $M$ and other scalars,
already where $r\ll \min\{m,n\}$.
  \\
{\bf 1.3. Quality of dual {\em LRA}.}  
 Thms. \ref{eqtherr} and \ref{therrfctr} 
should be of some qualitative interest, e.g., they
imply that
miserable failure of the dual sketching algorithms    running at sublinear cost is highly unlikely
under  both of our probabilistic models, even though it occurs for worst case inputs.

On the other hand, the 
upper bounds of both Thms. \ref{eqtherr}
 and \ref{therrfctr}  
on the output error norm of {\em LRA} are too high to support practical use of these algorithms. We, however, propose and successfully test some semi-heuristic recipes for empirical decrease of these bounds (see Sec. \ref{srndsmpl}):	

(i) Thms. \ref{eqtherr} and \ref{therrfctr} 
hold for any matrix $H$ that has orthonormal
columns or is just well-conditioned.
We hope that the classes of  inputs $M$  that are hard for {\em LRA} vary
when matrices $H$ vary, and if so, we can widen  our success for accurate {\em LRA} by applying a sketching algorithm for a number of
distinct sketch matrices $H$ or their  combinations.
    
(ii) With sketch matrices obtained with  {\em sparse subspace embedding} \cite{C16,CDDRa,CFSa},
\cite[Sec. 3.3]{TYUC19},
\cite[Sec. 9]{MT20}, one can devise {\em LRA} algorithms running 
at linear computational cost.
According to \cite{L09},
 such acceleration  of {\em LRA}
  that run at   superlinear cost  with Gaussian, SRHT, and SRFT sketch matrices tends to
 make the  output accuracy of the {\em LRA} somewhat less reliable, although \cite{CFSa} partly overcomes
 this  problem for incoherent matrices,  that
is, filled with entries of comparable magnitude (see \cite{CFSa} or
 \cite [Def. 1.1]{CD13} for  formal definition). One can  multiply a  matrix by Subsampled Randomized Hadamard or Fourier Transform ({\em SRHT or SRFT)} dense matrices at  superlinear cost
 towards making $M$ incoherent \cite{CFSa}.
For a compromise, we devise {\em Abridged SRHT and SRFT sketch matrices}. They are  Ultrasparse, can  be multiplied by a dense matrix at sublinear cost, and
 have orthonormal columns, which allows us to apply Thms.  \ref{eqtherr} and \ref{therrfctr}.
This recipe is also indirectly supported by
 Remark \ref{reprprp} in
Appendix \ref{srndoprpr}. 
\\ 
{\bf 1.4. Related work.} 
Sublinear cost algorithms, called superfast,  have been studied intensively for
Toeplitz, Hankel, Vandermonde, Cauchy, and other structured matrices having small displacement rank  and defined by small number of parameters
(see \cite{P01,
XXG12,P15} 
and
extensive bibliography therein).
More recently, randomized {\em LRA} algorithms
running at sublinear cost
have been proposed in
\cite{MW17,
BW18,
CETW23}  for some special but large and important classes of matrices
defined by large numbers of independent parameters. Most notably, the authors of  \cite{MW17,
CETW23} proved that their algorithms are expected
to output near-optimal {\em LRA} for Symmetric Positive
Semidefinite (SPSD) matrices.\footnote{The sublinear cost of the {\em LRA} algorithms of \cite{MW17,
BW18,CETW23}  does not include the
 superlinear cost of a posteriori  estimation of  their output error norms and correctness
verification, but the  deterministic algorithm of \cite[Part III]{LP20}, running at sublinear cost, computes {\em LRA} of an
$n\times n$ SPSD matrix
with both spectral and Frobenius error norms within a factor of $n$ from optimal and as by-product,  at no additional cost, estimates
error norm, verifying correctness.}

As we recalled already, sublinear cost C-A iterations  
 consistently 
output 
 accurate
{\em LRA} empirically for a large class of matrices. Based on  
advance analysis, 
some  limited formal support
has been obtained  
for such
empirical behavior 
of C-A 
iterations and/or their ingenious modification.

Namely, Chiu and Demanet in \cite{CD13}
prove
that a simple  algorithm
running at sublinear cost  outputs quite accurate
{\em LRA} in its memory efficient form of $CUR$ for $C$ and $R$ made up of $l$ and $k$ columns and rows of $M$, respectively,
provided that  a rank-$r$ approximation of $M$
admits SVD-like factorization
$XYZ^T$ where the column vectors of the $m\times r$ factor $X$ and $n\times r$ matrix $Z$ are
orthonormal
 and incoherent.
Chiu and Demanet also prove that this property for $X$ alone  is sufficient 
when they extend  
 their algorithm with a single C-A step. 

 Cortinovis and Ying in \cite{CY25} 
extend the
latter results
to the case where some orthogonal columns are sparse rather than incoherent
provided that an {\em LRA}  algorithm
generalizes C-A step  by using
``progressive
alternating direction pivoting"
of \cite[Alg.
 2.2]{LYMHY}, \cite{X24} -- it combines uniform
random choice of   $s<r$  columns and rows of $M$ for the factors
$C$ and $R$ of {\em CUR LRA}
with expanding and  updating these sets by means
of the Interpolative Decomposition of \cite[Sec. 3.2.3]{HMT11} based on Strong Rank Revealing  QR
factorization of \cite{GE96}.

The important works of \cite{CD13,CY25}
propose novel advanced techniques
for the study of primal {\em LRA},
while our  theorems are on dual {\em LRA} (for random inputs $M$); our semi-heuristic sublinear cost   modifications of the random sketching {\em LRA},
 motivated 
by these theorems,
are also very much
distinct from
the techniques of \cite{CD13,
LYMHY,X24,
CY25}.

 We extracted
these  sublinear cost modifications of random sketching {\em LRA} as well as  both Thms. \ref{eqtherr} and \ref{therrfctr} 
from our unpublished  report \cite{PLSZb}, 
which cites
\cite{PQY15, 
PZ17,
PLSZ16,
PLSZ17}
 as its predecessors and states motivation for further research on sublinear cost {\em LRA} as its major goal.
Encouraged by  appearance of \cite{X24,
CY25},
we hope that
publication of our results should also contribute to that goal.
\\  
{\bf 1.5. Organization of the paper.} 
 Sec. \ref{ssdef}  is devoted to  background 
on matrix computations.  
In Secs. \ref{sbsalg} and \ref{sdetrerr}   we recall 
sketching algorithms for {\em LRA} and their deterministic output error bounds, respectively. In Sec. \ref{serrranin} we prove
 error bounds for our dual {\em LRA} algorithms --  our main results. 
  In Sec. \ref{srndsmpl} 
 we cover numerical tests. 
In Appendix \ref {snrmg} we recall the known estimates for the norms of a Gaussian matrix and its pseudo inverse.
In Appendix \ref{srndoprpr} we prove that pre-processing with Gaussian sketch matrices turns any matrix that admits  $LRA$ into a perturbed factor-Gaussian matrix.
 In Appendix \ref{slmmalg1} we recall
the error bounds for some known sketching algorithms.
  In Appendix \ref{shrdin} we  
    specify some small families of input matrices on which any sublinear cost {\em LRA}  fails. 
     In Appendix \ref{spreprmlt} 
 we  generate two  families of Ultrasparse sketch matrices. 

\medskip
 
 
\section{Background on matrix computations}\label{ssdef}

 
\subsection{Definitions and two lemmas}\label{sbckgm}  
  
To simplify our presentation we assume  dealing
with real matrices throughout,
 except for  Appendix \ref{spreprmlt}, but our study can be quite
 readily extended to the case of complex matrices.

\begin{itemize} 
  \item
$M^T$ denotes the transpose of a matrix
$M$.
 \item
 $||\cdot||_2$ and $||\cdot||_F$ denote the spectral and  Frobenius  matrix norms, respectively;  we write
$|||\cdot|||$ where a property holds under both  of these norms (cf. \cite[Thm. 9.1]{HMT11}).

\item
  A {\em (compact) singular value decomposition (SVD)} of an $m\times n$ matrix $M$ of a rank $\rho$ (cf. \cite[page 31]{B15}) is the decomposition $M=U\Sigma  V^T$ where $\Sigma=
 \diag(\sigma_j)_{j=1}^{\rho}$ is
  the diagonal matrix   of the  singular values of $M$, $\sigma_1\ge \sigma_2\ge\dots\ge \sigma_\rho>0$, and  $U\in \mathbb R^{m\times \rho}$ and $V\in \mathbb R^{n\times \rho}$  are
  two  matrices with orthonormal columns, filled with the associated left and right singular spaces, 
  respectively.
 
\item%
 For $r\le\rho$ and a rank-$\rho$ matrix $M$
define its
 {\em rank-$r$ truncation}, $M_{r}$, obtained  by setting 
$\sigma_j(M)=0$ for $j>r$. Its (compact) SVD is said 
to be $r${\em -top SVD} of $M$.
It is a rank-$r$ approximation of $M$ having minimal spectral and Frobenius error norms
\begin{equation}\label{eqecky}
\tilde \sigma_{r+1}(M):=|||M-M_r|||,
\end{equation}
by virtue of Eckart--Young--Mirski theorem
(cf. \cite[page 79]{GL13}), where
$\tilde \sigma_{r+1}(M)= \sigma_{r+1}(M)$
under spectral norm and
$\tilde \sigma_{r+1}(M)^2=\sum_{j>r}\sigma_{j}(M)^2$ under Frobenius norm.


\item
 $\rank(M)$  denotes the  {\em rank} 
of a matrix $M$. 
$\epsilon$-$\rank(M)$ is argmin$_{|||E|||\le\epsilon|||M|||}\rank(M+E)$; it is called
 {\em numerical rank}, $\nrank(M)$,  if a tolerance 
  $\epsilon$ is small in context, typically 
being  linked to machine precision or the level of relative errors of the computations (see \cite[page 276]{GL13}). 
\item
$M^+$ denotes the Moore -- Penrose pseudo inverse of $M$.

 \item
For a matrix $M=(m_{i,j})_{i,j=1}^{m,n}$ and two sets $\mathcal I\subseteq\{1,\dots,m\}$  
and $\mathcal J\subseteq\{1,\dots,n\}$,   define
the submatrices
$M_{\mathcal I,:}:=(m_{i,j})_{i\in \mathcal I; j=1,\dots, n},  
M_{:,\mathcal J}:=(m_{i,j})_{i=1,\dots, m;j\in \mathcal J},~{\rm and}~  
M_{\mathcal I,\mathcal J}:=(m_{i,j})_{i\in \mathcal I;j\in \mathcal J}.$ 
  
\item
$\textrm{Span}(M_{1,:}^T, M_{2,:}^T, ..., M_{m,:}^T )$ denotes the row space
of a matrix $M = (m_{i,j})_{i,j=1}^{m,n}=(M_{i,:}^T)_{i=1}^m=(M_{:,j})_{j=1}^n,$ 
and 
$\textrm{Span}(M_{:, 1}, M_{:, 2}, ..., M_{:, n})$ denotes its column space.
 \end{itemize}
 
 
   
 
 \begin{lemma}\label{lehg} {\rm [The norm of the pseudo inverse of a matrix product  (cf. \cite{B15}).]} Suppose that $A\in\mathbb R^{k\times r}$, $B\in\mathbb R^{r\times l}$, and the matrices $A$ and   $B$ have full rank $r\le \min\{k,l\}$. Then
 $$|||(AB)^+|||\le |||A^+|||~~|||B^+|||.$$ \end{lemma}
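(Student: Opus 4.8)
The plan is to reduce the claim about pseudo-inverses to a claim about smallest singular values, using the fact that for any matrix $C$ of full column (or row) rank $r$ one has $|||C^+|||_2 = 1/\sigma_r(C)$, and that for the Frobenius norm $|||C^+|||_F$ is controlled in the same way through the singular values of $C$. Since both $A\in\mathbb R^{k\times r}$ and $B\in\mathbb R^{r\times l}$ have full rank $r$, the product $AB\in\mathbb R^{k\times l}$ also has rank $r$, so $(AB)^+$ is the genuine (reflexive) pseudo-inverse and one can write it explicitly. The key algebraic identity I would invoke is $(AB)^+ = B^+ A^+$, which holds precisely because $A$ has full column rank and $B$ has full row rank (this is the classical reverse-order law of Greville/Ben-Israel, valid exactly under the rank hypotheses stated). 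Once that identity is in hand, submultiplicativity of any matrix norm that is submultiplicative — in particular both the spectral and the Frobenius norm — gives $|||(AB)^+||| = |||B^+A^+||| \le |||B^+|||\,|||A^+|||$, and since scalar multiplication is commutative this is the same as $|||A^+|||\,|||B^+|||$, which is the assertion. The notation $|||\cdot|||$ in the excerpt stands for ``either the spectral or the Frobenius norm,'' and both are submultiplicative, so a single argument covers both cases simultaneously.

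First I would record that $\operatorname{rank}(AB)=r$: since $B$ has full row rank, its rows span $\mathbb R^r$, hence $\operatorname{range}(B^*)=\mathbb R^r$ and the map $x\mapsto Bx$ is surjective onto $\mathbb R^l$'s relevant subspace; composing with the injective map induced by $A$ (full column rank) keeps the rank at $r$. This guarantees $A^+A=I_r$ and $BB^+=I_r$, which are exactly the cancellation properties needed to verify the reverse-order law by a direct check of the four Moore--Penrose conditions for $B^+A^+$ as a candidate for $(AB)^+$. Alternatively, and more cleanly, I would use the explicit formulas $A^+=(A^*A)^{-1}A^*$ and $B^+=B^*(BB^*)^{-1}$ (valid under the full-rank hypotheses) together with $(AB)^+=(AB)^*\big((AB)(AB)^*\big)^{-1}$... but the compositional/reverse-order route is shorter and avoids manipulating the inverse of $ABB^*A^*$.

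The main obstacle — really the only subtle point — is justifying the reverse-order law $(AB)^+=B^+A^+$, since it is \emph{false} for general matrices and holds here only because of the full-rank/compatible-rank hypotheses ($A$ full column rank, $B$ full row rank, common rank $r$). I would therefore spend the bulk of the proof verifying the four Penrose axioms for the product $B^+A^+$, using $A^+A=I_r$ and $BB^+=I_r$ to collapse the middle terms: e.g. $ABB^+A^+AB = A(BB^+)( A^+A)B = AB$, and $B^+A^+ABB^+A^+ = B^+(A^+A)(BB^+)A^+ = B^+A^+$, while the two Hermitian-symmetry conditions follow from $ABB^+A^+ = A A^+$ and $B^+A^+AB = B^+B$, each of which is a projector (hence self-adjoint) because $AA^+$ and $B^+B$ are orthogonal projections. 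After that, submultiplicativity finishes the proof in one line. I expect the whole argument to be short; the value of stating it carefully is to make transparent exactly where the hypotheses $\operatorname{rank}(A)=\operatorname{rank}(B)=r\le\min\{k,l\}$ are consumed.
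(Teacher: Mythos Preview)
The paper does not supply its own proof of this lemma; it merely states the result with a reference to Bj\"ork's textbook \cite{B15}. Your argument via the reverse-order law $(AB)^+ = B^+A^+$---valid here precisely because $A$ has full column rank $r$ and $B$ has full row rank $r$---followed by submultiplicativity of the spectral and Frobenius norms is correct and is the standard route one finds in such references. Your verification of the four Penrose conditions using $A^+A = I_r$ and $BB^+ = I_r$ is sound, and the only minor wrinkle (that $\rank(AB)=r$) follows immediately from those same identities.
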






\begin{lemma}\label{lemma:pert_sing_space} {\rm (The impact of a   perturbation of a matrix on its singular space, adapted from \cite{W72},
 \cite[Thm. 6.4]{S73}, \cite[Thm. 1]{GT16}.)}
Let $M$ be an $m\times n$ matrix of rank $r < \min (m, n)$ where
\begin{eqnarray*}
M = 
\begin{bmatrix}
U_r & U_{\perp}
\end{bmatrix}
~
\begin{bmatrix}
\Sigma_r & 0 \\
0 & 0
\end{bmatrix}
~
\begin{bmatrix}
V_r^T \\
V_{\perp}^T
\end{bmatrix}
\end{eqnarray*}
is its SVD, and let $E$ be a perturbation matrix  such that 
\begin{eqnarray*}
\delta = \sigma_r(M) - 2~||E||_2 > 0
~{\rm and}~ 
||E||_F \le \frac{\delta}{2}.
\end{eqnarray*}
Then there exists a matrix  such that  $P\in\mathbb R^{(n-r)\times r}$,  $||P||_F < 2~\frac{||E||_F}{\delta} < 1$, and the columns of the matrix
$\tilde{V} = V_r + V_{\perp}P$ span the right leading singular subspace 
of $\tilde M = M + E$.  
\end{lemma}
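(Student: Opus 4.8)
\medskip
\noindent\textbf{Proof plan.} The plan is to write the sought matrix $P$ down explicitly, as the ``graph coordinate'' of the right leading singular subspace of $\tilde M=M+E$ over $\range(V_r)$, exploiting that $\rank(M)=r$ exactly, and then to bound $\|P\|_F$ through the principal angles between the right leading singular subspaces of $M$ and of $\tilde M$.

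First I would make sure the object in the conclusion is well defined, i.e.\ that $\sigma_r(\tilde M)>\sigma_{r+1}(\tilde M)$. By Weyl's perturbation inequality (cf.\ Lemma~\ref{lesngr}) and $\sigma_{r+1}(M)=0$ we get $\sigma_r(\tilde M)\ge\sigma_r(M)-\|E\|_2=\delta+\|E\|_2$ and $\sigma_{r+1}(\tilde M)\le\|E\|_2$, so the gap is at least $\delta>0$ and, in particular, $\sigma_r(\tilde M)\ge\delta>0$. Let $\tilde U_*\in\mathbb R^{m\times r}$ and $\tilde V_*\in\mathbb R^{n\times r}$ have orthonormal columns and let $\tilde\Sigma_*=\diag(\sigma_1(\tilde M),\dots,\sigma_r(\tilde M))$, so that $\range(\tilde V_*)$ is the right leading singular subspace of $\tilde M$, $\tilde\Sigma_*$ is invertible, and $\tilde M^T\tilde U_*=\tilde V_*\tilde\Sigma_*$.

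The key step is the clean residual identity that $\rank(M)=r$ buys us. Writing $M=U_r\Sigma_rV_r^T$ and left-multiplying $\tilde V_*\tilde\Sigma_*=\tilde M^T\tilde U_*=(V_r\Sigma_rU_r^T+E^T)\tilde U_*$ by $V_\perp^T$, the first summand dies because $V_\perp^TV_r=0$, so
\[
V_\perp^T\tilde V_*=V_\perp^TE^T\tilde U_*\tilde\Sigma_*^{-1},
\qquad\text{hence}\qquad
\|V_\perp^T\tilde V_*\|_F\le\frac{\|E\|_F}{\sigma_r(\tilde M)}\le\frac{\|E\|_F}{\delta}\le\frac12 .
\]
The singular values of $V_\perp^T\tilde V_*$ are the sines, and those of $V_r^T\tilde V_*$ the cosines, of the principal angles between $\range(\tilde V_*)$ and $\range(V_r)$ (CS decomposition of $[\,V_r\ V_\perp\,]^T\tilde V_*$), so $\sigma_{\min}(V_r^T\tilde V_*)^2=1-\|V_\perp^T\tilde V_*\|_2^2\ge 3/4$; thus $V_r^T\tilde V_*$ is invertible with $\|(V_r^T\tilde V_*)^{-1}\|_2\le 2/\sqrt3$. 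I then set
\[
P:=V_\perp^T\tilde V_*\,(V_r^T\tilde V_*)^{-1}\in\mathbb R^{(n-r)\times r},
\]
and use $V_rV_r^T+V_\perp V_\perp^T=I_n$ to get $V_r+V_\perp P=\tilde V_*(V_r^T\tilde V_*)^{-1}$, whose columns span $\range(\tilde V_*)$ since $(V_r^T\tilde V_*)^{-1}$ is an invertible $r\times r$ factor — exactly the right leading singular subspace of $\tilde M$. Finally
\[
\|P\|_F\le\|V_\perp^T\tilde V_*\|_F\,\|(V_r^T\tilde V_*)^{-1}\|_2\le\frac{2}{\sqrt3}\,\frac{\|E\|_F}{\delta}<\frac{2\|E\|_F}{\delta},
\]
and since $\|E\|_F\le\delta/2$ the same bound is at most $1/\sqrt3<1$.

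I do not expect a serious obstacle: the whole argument is a few lines once one spots the organizing idea, namely that $\rank(M)=r$ collapses the usual Wedin/Davis--Kahan residual to $V_\perp^TE^T\tilde U_*$ with \emph{no} cross term, after which the hypothesis $\|E\|_F\le\delta/2$ forces every principal angle below $\pi/6$ and legitimizes the graph representation $\tilde V=V_r+V_\perp P$ with the stated constant. (Two alternatives give the same $P$: invoke Wedin's $\sin\Theta$ theorem directly, or, closer to \cite{GT16}, solve the Riccati equation $W_{22}P-PW_{11}+W_{12}^T-PW_{12}P=0$ for the leading invariant subspace of $\tilde M^T\tilde M$ by Banach's fixed-point theorem on the ball $\{\|P\|_F\le 2\|E\|_F/\delta\}$; both routes need a contraction/gap-domination estimate that the direct computation above sidesteps.)
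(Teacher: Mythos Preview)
The paper does not supply its own proof of this lemma; it is stated as ``adapted from \cite[Theorem 1]{GT16}'' and followed only by Remark~\ref{remark:pert_sing_space}. So there is no in-paper argument to compare against directly.

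Your argument is correct and self-contained. The key observation that $\rank(M)=r$ exactly kills the cross term in $V_\perp^T\tilde M^T\tilde U_*$ is precisely what makes the Wedin-type residual collapse to a single $E$-term, and your bound $\|V_\perp^T\tilde V_*\|_F\le\|E\|_F/\delta$ then feeds the CS decomposition to control $(V_r^T\tilde V_*)^{-1}$ and hence $P$. The graph representation $V_r+V_\perp P=\tilde V_*(V_r^T\tilde V_*)^{-1}$ is exactly the one implicit in Remark~\ref{remark:pert_sing_space}, since $(I_r+P^TP)^{-1/2}$ is then the orthonormalizing factor. Compared with the route in \cite{GT16}, which (as you note) passes through a quadratic matrix equation for the invariant subspace of $\tilde M^T\tilde M$ and a contraction argument, your direct computation is shorter and more transparent in the rank-$r$ case, at the cost of not generalizing to the situation where $\Sigma_\perp\ne 0$. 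One cosmetic point: the lemma asserts $2\|E\|_F/\delta<1$, but the hypothesis only gives $\le 1$; your proof correctly establishes $\|P\|_F\le\frac{2}{\sqrt3}\,\|E\|_F/\delta<2\|E\|_F/\delta$ and $\|P\|_F\le 1/\sqrt3<1$, which is all that is ever used downstream.
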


\begin{remark}\label{remark:pert_sing_space}
Matrix $\tilde{V}$ from the above  does not necessarily have orthonormal  columns, but the matrix 
$(V_r + V_{\perp}P)(I_r + P^TP)^{-1/2}$
has orthonormal columns, {\em i.e.}, $(I_r + P^TP)^{-1/2}$ normalizes $\tilde V$ (see  \cite{W72,S73,
GT16}).
\end{remark}


\subsection{Gaussian  and factor-Gaussian 
matrices}


{\em Constant matrices} are filled with constants, unlike random matrices, filled with random variables.



\begin{theorem}\label{thrnd} {\em [Non-degeneration of a Gaussian  
matrix.]}
Suppose that 
$M\in\mathbb R^{p\times q}$ is a constant matrix, $r\le\rank(M)$,
and $F$ and $H$ are $r\times p$ and $q\times r$ independent  Gaussian 
matrices, respectively.
Then  
the matrices $F$, $H$, $FM$,  and $MH$  
have full rank $r$ 
with probability 1.
\end{theorem}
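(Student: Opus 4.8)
The plan is to reduce every one of the four claims to the standard fact that a multivariate polynomial which is not identically zero vanishes only on a set of Lebesgue measure zero, combined with the absolute continuity of the Gaussian distribution on $\mathbb{R}^{rp}$ and $\mathbb{R}^{qr}$.

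First I would handle $MH$. Since $r\le\rank(M)$, fix a column index set $\mathcal{J}\subseteq\{1,\dots,q\}$ with $|\mathcal{J}|=r$ for which $M_{:,\mathcal{J}}$ has rank $r$, and then a row index set $\mathcal{I}\subseteq\{1,\dots,p\}$ with $|\mathcal{I}|=r$ for which the $r\times r$ matrix $M_{\mathcal{I},\mathcal{J}}$ is nonsingular. Regard $d(H):=\det\bigl((MH)_{\mathcal{I},:}\bigr)$ as a polynomial in the $qr$ entries of $H$ (it is multilinear in the columns of $H$, hence a genuine polynomial). To see that $d$ is not the zero polynomial, evaluate it at the constant $q\times r$ matrix $H_0$ whose columns are the standard basis vectors $e_j$, $j\in\mathcal{J}$: then $MH_0=M_{:,\mathcal{J}}$ and $(MH_0)_{\mathcal{I},:}=M_{\mathcal{I},\mathcal{J}}$, so $d(H_0)=\det(M_{\mathcal{I},\mathcal{J}})\neq 0$. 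Consequently the zero set of $d$ has Lebesgue measure zero; since the entries of $H$ are {\em iid} standard Gaussian, the law of $H$ is absolutely continuous with respect to Lebesgue measure on $\mathbb{R}^{qr}$, whence $d(H)\neq 0$ with probability $1$. This forces $\rank(MH)\ge r$, and $\rank(MH)\le r$ holds trivially because $MH$ has only $r$ columns; therefore $\rank(MH)=r$ almost surely.

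The argument for $FM$ is symmetric: choose $\mathcal{I}$ with $|\mathcal{I}|=r$ and $M_{\mathcal{I},:}$ of rank $r$, then $\mathcal{J}$ with $|\mathcal{J}|=r$ and $M_{\mathcal{I},\mathcal{J}}$ nonsingular, set $e(F):=\det\bigl((FM)_{:,\mathcal{J}}\bigr)$ as a polynomial in the $rp$ entries of $F$, and note that it does not vanish at the constant matrix $F_0$ built from the rows $e_i^T$, $i\in\mathcal{I}$, of $I_p$, since then $F_0M=M_{\mathcal{I},:}$ and $(F_0M)_{:,\mathcal{J}}=M_{\mathcal{I},\mathcal{J}}$. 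Hence $\rank(FM)=r$ almost surely. The full-rank claims for $F$ and $H$ themselves are the special cases $M=I_p$ and $M=I_q$ of the two statements just proved (legitimate because $r\le p$ and $r\le q$). Finally, each of the four events holds with probability $1$, so their intersection does as well; independence of $F$ and $H$ is not even needed here, though it is harmless.

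The only point requiring care — and the one I would isolate as a lemma if it is not treated as folklore — is the measure-theoretic fact that a nonzero polynomial in $N$ real variables has zero set of $N$-dimensional Lebesgue measure zero (a short induction on $N$), together with the observation that a product of $N$ {\em iid} Gaussians has a density on $\mathbb{R}^N$, so that ``Lebesgue-null'' upgrades to ``probability zero''. Everything else is the routine bookkeeping of selecting the right minors and exhibiting explicit constant witnesses, so I expect no substantive obstacle.
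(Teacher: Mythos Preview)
Your proof is correct and follows essentially the same approach as the paper: both reduce rank deficiency to the vanishing of a polynomial in the Gaussian entries and invoke the fact that such a zero set has Lebesgue (hence Gaussian) measure zero. The only cosmetic difference is that the paper uses the Gram determinants $\det(H^TH)$, $\det((MH)^TMH)$, etc., and appeals to the dimension of the resulting algebraic variety, whereas you single out one well-chosen $r\times r$ minor and exhibit an explicit witness $H_0$ showing the polynomial is nonzero; your version is slightly more self-contained for that reason.
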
 
\begin{proof}
 Rank deficiency of matrices 
 $H$, $FM$, and $MH$ 
is equivalent to turning into 0
the determinants
$\det(FF^T)$, $\det(H^TH)$, $\det((MH^T)MH)$, and $\det(FM(FM)^T)$,
respectively. 
The claim follows because 
 these equations define algebraic varieties of  lower
dimension in the linear spaces of the entries, considered independent variables (cf., e.g., \cite[Prop. 1]{BV88}).
\end{proof}

 \begin{remark}\label{refllrnk}
Events that  occur with probability 0 are  immaterial
 for  our probability estimates, and
 hereafter we say that a  matrix  has full rank even  if it is rank deficient with probability 0.  \end{remark}
 




\begin{lemma}\label{lepr3} {\rm  [Orthogonal Invariance.] \cite[Theorem 3.2.1]{T12}}.
Suppose that  
$G$ is an $m\times n$ Gaussian 
matrix, 
$k\le \min\{m,n\}$ is a positive integer, and $S\in\mathbb R^{k\times m}$ and  $T\in\mathbb R^{n\times k}$ are constant matrices,  
 having orthonormal rows and columns, respectively.
Then $SG$ and $GT$ are random matrices having  distribution of  $k\times n$ and $m\times k$ Gaussian  random matrices, respectively.
\end{lemma}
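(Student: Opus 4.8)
The plan is to reduce the claim to the elementary fact that a fixed linear map $A$ applied to a standard Gaussian vector produces a centered Gaussian vector with covariance $AA^T$, together with the observation that stacking independent such vectors preserves both Gaussianity and independence. No heavy machinery is needed; everything follows from the column/row description of a Gaussian matrix in Definition \ref{defnrm}.

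First I would treat $SG$. Write $G=[g_1\mid\cdots\mid g_n]$ in terms of its columns; by Definition \ref{defnrm} the vectors $g_1,\dots,g_n\in\mathbb R^{m}$ are independent, each distributed as $N(0,I_m)$. The $j$-th column of $SG$ is $Sg_j$, and since $S$ is a constant matrix the vectors $Sg_1,\dots,Sg_n$ remain independent. Each $Sg_j$ is a centered Gaussian vector in $\mathbb R^{k}$ with covariance $S\,I_m\,S^T=SS^T$, and the hypothesis that $S$ has orthonormal rows means precisely that $SS^T=I_k$; hence $Sg_j\sim N(0,I_k)$. Thus $SG$ has $n$ independent columns, each $N(0,I_k)$, which is exactly the definition of a $k\times n$ Gaussian matrix.

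Next I would treat $GT$ by the symmetric argument on rows. The rows $G_{1,:},\dots,G_{m,:}$ of $G$ are independent row vectors, each distributed as $N(0,I_n)$. The $i$-th row of $GT$ is $G_{i,:}T$, and for a constant $T$ these stay independent across $i$; each $G_{i,:}T$ is a centered Gaussian row vector in $\mathbb R^{k}$ with covariance $T^T I_n T=T^TT=I_k$, where now we use that $T$ has orthonormal columns. Hence $GT$ has $m$ independent rows, each $N(0,I_k)$, i.e.\ $GT$ is an $m\times k$ Gaussian matrix. Equivalently one may apply the first part to $G^T$ (an $n\times m$ Gaussian matrix) with the multiplier $T^T$ (which has orthonormal rows) and then transpose.

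I do not expect a real obstacle here. The only points demanding care are bookkeeping ones: distinguishing ``orthonormal rows'' ($SS^T=I_k$) from ``orthonormal columns'' ($T^TT=I_k$), and noting that multiplication by a deterministic matrix cannot destroy the independence of the columns (resp.\ rows) of $G$. The two facts invoked about Gaussian vectors — the affine-image rule $N(0,I)\mapsto N(0,AA^T)$ and the fact that a centered Gaussian law is determined by its covariance — are standard and may be quoted from any reference on the multivariate normal distribution (as in \cite{T12}); for completeness one could instead verify them directly via characteristic functions, since the characteristic function of $Sg_j$ at $t\in\mathbb R^k$ equals that of $g_j$ at $S^Tt$, namely $\exp(-\tfrac12\|S^Tt\|_2^2)=\exp(-\tfrac12\|t\|_2^2)$.
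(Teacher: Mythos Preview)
Your proof is correct and is the standard argument. Note, however, that the paper does not give its own proof of this lemma: it is simply quoted as \cite[Theorem 3.2.1]{T12} without argument, so there is nothing to compare against beyond observing that your column/row covariance computation is exactly the kind of verification one would expect underlying the cited reference.
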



\begin{definition}\label{deffctrg}  {\em [Factor-Gaussian matrices.]} 
Let 
$A\in \mathbb R^{m\times r}$, 
$B\in \mathbb R^{r\times n}$, and
$C\in \mathbb R^{r\times r}$ be 
three constant well-conditioned matrices of full rank $r<\min\{m,n\}$.
Let $G_1$ and $G_2$ be $m\times r$ and $r\times n$ independent Gaussian 
matrices, respectively.
Then  we call the  matrices 
$G_1B$, $AG_2$, and 
$G_1 C G_2$
 {\em left},
 {\em right}, and  
 {\em two-sided factor-Gaussian  
 matrices
 of  rank} $r$, respectively.
\end{definition} 
 
 \begin{theorem}\label{thfctrg}
Any  two-sided  factor-Gaussian matrix 
$G_{1} C G_{2}$ 
shares  probability distribution with the matrix 
$G_{1} \Sigma_C G_{2}$ for some diagonal matrix 
$\Sigma_C=
(\sigma_j)_{j=1}^r$ such that  
$\sigma_1\ge \sigma_2\ge \dots\ge \sigma_r>0$.
\end{theorem}
\begin{proof}
Let $C=U_C\Sigma_C V_C^*$ be SVD.
Then the matrices $G_{1}U_C$ 
and $V_C^*G_{2}$
have distributions of  $m\times r$ and $r\times n$ Gaussian matrices, respectively, 
by virtue of Lemma \ref{lepr3}. 
\end{proof} 
%
%
%
%


\section{{\em LRA} by means of sketching: four  algorithms}\label{sbsalg}
 

Next we slightly generalize the sketching  {\em LRA} algorithms of \cite{HMT11,
TYUC17}
(see Remarks \ref{realg10}
 -- \ref{resmpl}).


\begin{algorithm}\label{alg1} 
   
 
\begin{description}


\item[{\sc Input:}] 
An $m\times n$ matrix  $M$ and a
 target  rank  $r$.  


\item[{\sc Output:}] 
Two matrices $X\in \mathbb R^{m\times l}$ and 
$Y\in \mathbb R^{l\times n}$ for $r\le l\le n$ defining
an {\em LRA} $\tilde M=XY$ of $M$.


\item[{\sc Initialization:}] 
 Fix an integer $l=r+p\le n$, for $p\ge 0$, and
 an $n\times l$  matrix $H$ of full rank $l$.

 
\item[{\sc Computations:}]

\begin{enumerate}
\item 
Compute the  $m\times l$ matrix $MH$.
\item 
Fix a nonsingular $l\times l$ matrix $T^{-1}$  and output  the $m\times l$ matrix $X:=MHT^{-1}$.
\item 
Output an $l\times n$ matrix 
$Y:= {\rm argmin}_V ~|||XV-M|||=X^+M$. 
\end{enumerate}


\end{description}

 
\end{algorithm}


\begin{algorithm}\label{alg0} 
   
 
\begin{description}


\item[{\sc Input:}] 
As in Alg. \ref{alg1}.



\item[{\sc Output:}] 
Two matrices $X\in \mathbb R^{k\times n}$ and 
$Y\in \mathbb R^{m\times k}$ defining
an {\em LRA} $\tilde M=YX$ of $M$.


\item[{\sc Initialization:}] 
 Fix an integer $k=r+p\le m$, 
 for $p\ge 0,$ and
 a $k\times m$  matrix $F$ of full numerical rank $k$.

 
\item[{\sc Computations:}]

\begin{enumerate}
\item 
Compute the  $k\times m$ matrix $FM$.
\item 
Fix a nonsingular $k\times k$ matrix $S^{-1}$; then output
 $k\times n$ matrix $X:=S^{-1}FM$.
\item 
Output an  $m\times k$ matrix 
$Y:= {\rm argmin}_V ~|||VX-M|||=MX^+$.
\end{enumerate}


\end{description}

 
\end{algorithm}


\noindent 


The following
algorithm combines row and column sketching. 
  
\begin{algorithm}\label{alg01} 

 
\begin{description}


\item[{\sc Input:}] 
As in Alg. \ref{alg1}.


\item[{\sc Output:}] 
Two matrices $X\in \mathbb R^{m\times k}$ and 
$Y\in \mathbb R^{k\times n}$ defining
an {\em LRA} $\tilde M=XY$ of $M$.


\item[{\sc Initialization:}] 
 Fix two
 integers $k$ and $l$, $r\le k\le m$
 and $r\le l\le n$; fix
 two  matrices $F\in\mathbb R^{k\times m}$ 
and $H\in\mathbb R^{n\times l}$  
 of full numerical ranks
 and two nonsingular matrices $S\in\mathbb R^{k\times k}$ and $T\in\mathbb R^{l\times l}$.

 
\item[{\sc Computations:}]
1. Output the matrix  
$X=MHT^{-1 }\in\mathbb R^{m\times l}$.

 2. Compute  the matrices  
 $U:=S^{-1}FM\in\mathbb R^{k\times n}$ and
$W:=S^{-1}FX\in\mathbb R^{m\times l}$.

3. Output the $l\times n$ matrix 
$Y:={\rm argmin}_V |||WV-U|||=W^+U$.


\end{description}

 
\end{algorithm}
   


\begin{remark}\label{realg10} 
 We can obtain Algs. \ref{alg0}
  by applying Alg. \ref{alg1}
  to the transpose $M^T$.
Likewise,
by applying
Alg. \ref{alg01}
 to $M^T$ we obtain
 {\bf Alg.  3.4}.
We only study  Algs. \ref{alg1} and \ref{alg01}, but can  readily extend that study to 
Algs. \ref{alg0}
and 3.4.  \end{remark}

\begin{remark}\label{re0} 
Fix $k=r$,  $l>k$, a random  $n\times l$
matrix $H$ (e.g.,  Gaussian, SRHT, or SRFT matrix), identity matrix $S$, and $T$ equal to the $R$ factor in the $QR$ factorization of $MH$. Then the matrix $X$ has  orthonormal
columns and 
Algs. \ref{alg1}
turns into Proto-algorithm of \cite[Sec. 9]{HMT11}, while Alg.
 \ref{alg01}
turns into the {\rm Generalized   
Nystr{\"o}m algorithm} 
of \cite[Eqn. (3)]{N20}.
\cite{N20} stabilizes the latter  algorithm numerically
 -- essentially by means of setting to 0 all
singular
values of the matrix $W$ exceeded by a fixed 
 $\epsilon$,  ``a modest multiple
of the unit roundoff $u$ times $||W||$". 
Our  study can be readily extended  to such a stabilized
{\em LRA} because the stabilization 
little affects  complexity of {\em LRA} and  
only improves its output  accuracy.
We can obtain
various other modifications 
of Algs. \ref{alg1} and \ref{alg01}
 by fixing other
sketching matrices $F$ and $H$.

 \end{remark}


\begin{remark}\label{resmpl}Column (resp. row) sketching turns into  column (resp. row) {\em subset selection} where $H$ in Alg. \ref{alg1}  (resp. $F$ in Alg. \ref{alg0}) is a {\em sampling matrix}, that is, a  full rank submatrix of a permutation matrix.  \end{remark}

         
   



   
\section{Deterministic output error bounds for  sketching algorithms}\label{sdetrerr}
  
  
\subsection{Deterministic error bounds of Range Finder}\label{serrrng}

                                                                                                                                                                                                                                                                                                                                                                                  Next we recall some known estimates for the errors of Alg. \ref{alg1}, to be used in the next section.
                                                                                                                                                                                                                                                                                                                                                                                  
                                                                                                                                                                                                                                                                                                                                                                                  \begin{theorem}\label{thpert1} {\rm \cite[Thm. 9.1]{HMT11}.}
Suppose that Alg. \ref{alg1}
has been applied to a matrix $M$ and let  

\begin{eqnarray*}
 M=\begin{pmatrix} U_1& U_2\end{pmatrix}
\begin{pmatrix}\Sigma_1& \\
 &\Sigma_2 \end{pmatrix}\begin{pmatrix}~V_1^*\\~V_2^*\end{pmatrix}~~{\rm and}~
 M_r=U_1\Sigma_1 V_1^*
\end{eqnarray*} 
  be SVDs
 of the matrices $M$ and its rank-$r$ truncation $M_r$, 
 respectively. [$\Sigma_2=O$ and $XY=M$ if 
 $\rank(M)=r$. The  $r$ columns of $V_1$ are the $r$ top right singular vectors of $M$.]
 Write 
   \begin{equation}\label{eqc12}
C_1=V^*_1H,~  
 C_2=V^*_2H.
 \end{equation}
Assume that $||H||_2 \le 1$ and $\rank(C_1) = r$. Then
\begin{equation}\label{eqerrnrm} 
 |||M-XY|||^2\le
 |||\Sigma_2|||^2+|||\Sigma_2C_2C_1^+|||^2.
 \end{equation}
 \end{theorem}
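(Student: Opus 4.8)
The plan is to reduce the bound to the standard residual estimate for an orthogonal projection and then to insert a convenient rank-$r$ matrix into the range of the sketch. First I would invoke Remark~\ref{realg1}: the output of Algorithm~\ref{alg1} satisfies $XY = MH(MH)^+M = PM$, where $P := MH(MH)^+$ is the orthogonal projector onto $\range(MH)$; this identity is independent of the auxiliary nonsingular matrix $T$ and needs no rank assumption on $MH$, because $BB^+$ is the orthoprojector onto $\range(B)$ for every matrix $B$. Hence $M-XY = (I-P)M$, and it remains to bound $|||(I-P)M|||$.

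Next I would exhibit a full-rank-$r$ matrix contained in $\range(MH)$. Writing $V^*H = \binom{C_1}{C_2}$ and $\Sigma = \diag(\Sigma_1,\Sigma_2)$, one gets $MH = U_1\Sigma_1 C_1 + U_2\Sigma_2 C_2$. Since $\rank(C_1)=r$ and $C_1$ is $r\times l$ with $l\ge r$, it has full row rank, so $C_1 C_1^+ = I_r$; therefore $N := MH\,C_1^+ = U_1\Sigma_1 + U_2\Sigma_2 C_2 C_1^+$ obeys $\range(N)\subseteq\range(MH)$, hence $(I-P)N = 0$. Consequently, for every $r\times n$ matrix $K$ we have $(I-P)M = (I-P)(M-NK)$, and since $I-P$ is an orthogonal projector, so that $|||(I-P)Z|||\le|||Z|||$ for every unitarily invariant norm, it follows that $|||(I-P)M||| \le |||M-NK|||$.

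Now I would take $K := V_1^*$, which cancels the leading block: $NK = U_1\Sigma_1 V_1^* + U_2\Sigma_2 C_2 C_1^+ V_1^*$, so that
\[
M - NK = U_2\Sigma_2 V_2^* - U_2\Sigma_2 C_2 C_1^+ V_1^*.
\]
The two summands have orthogonal row spaces, because $V_2^*V_1 = 0$ forces $(U_2\Sigma_2 V_2^*)(U_2\Sigma_2 C_2 C_1^+ V_1^*)^* = 0$; expanding $(X-Y)(X-Y)^* = XX^* + YY^*$ then yields $|||X-Y|||^2 \le |||X|||^2 + |||Y|||^2$ for both the spectral and the Frobenius norm (with equality in the Frobenius case). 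Applying this, and using that $U_2$ has orthonormal columns while $V_2^*$ and $V_1^*$ have orthonormal rows, we get $|||U_2\Sigma_2 V_2^*||| = |||\Sigma_2|||$ and $|||U_2\Sigma_2 C_2 C_1^+ V_1^*||| = |||\Sigma_2 C_2 C_1^+|||$, which combine into (\ref{eqerrnrm}).

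The argument is short; the points that need care are the projection identity $XY = PM$ (so the bound is genuinely about $(I-P)M$ and is insensitive both to $T$ and to a possible rank deficiency of $MH$), the spectral-norm version of the matrix Pythagorean inequality for summands with orthogonal row spaces, and the routine bookkeeping that passes the factors $U_2$, $V_1^*$, $V_2^*$ through the norms. I would also note that the hypothesis $\|H\|_2\le 1$ is not used in deriving (\ref{eqerrnrm}) — it is a normalization kept for the corollaries that follow — whereas $\rank(C_1)=r$ is essential, as it is exactly what makes $C_1 C_1^+ = I_r$ and thus places a full-rank-$r$ matrix inside $\range(MH)$.
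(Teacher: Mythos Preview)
Your argument is correct and is essentially the standard proof from \cite[Theorem~9.1]{HMT11}, which the paper cites without reproducing: place $N=MHC_1^+$ inside $\range(MH)$, use the orthogonal-projector bound $|||(I-P)M|||\le|||M-NV_1^*|||$, and split the remainder via the Pythagorean identity coming from $V_2^*V_1=0$. Your observations that the identity $XY=P_{MH}M$ holds regardless of the rank of $MH$ and that the hypothesis $\|H\|_2\le 1$ is unused in (\ref{eqerrnrm}) are both accurate.
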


 
\begin{corollary}\label{copert0}
Under the assumptions of Thm. \ref{thpert1} and for
$\tilde \sigma_{r+1}(M)$ of (\ref{eqecky}) it holds that
\begin{equation}\label{eqmmr1}
|||M-XY|||/\tilde \sigma_{r+1}(M) \le (1+|||C_1^+|||^2)^{1/2} ~{\rm for}~C_1=V_1^*H.
\end{equation}

\end{corollary}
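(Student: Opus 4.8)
The plan is to combine the deterministic bound of Theorem~\ref{thpert1} with a couple of elementary submultiplicative inequalities. First I would note that, since $M-M_r=U_2\Sigma_2V_2^*$ with $U_2$ and $V_2$ having orthonormal columns, unitary invariance of both the spectral and the Frobenius norm gives $\tilde\sigma_{r+1}(M)=|||M-M_r|||=|||\Sigma_2|||$. (If $\rank(M)=r$ then $\Sigma_2=O$, $XY=M$, and the claimed inequality is vacuous; so assume $\tilde\sigma_{r+1}(M)=|||\Sigma_2|||>0$.) Thus, after dividing by $\tilde\sigma_{r+1}(M)$, inequality (\ref{eqmmr1}) is equivalent to
$$|||M-XY|||^2\le |||\Sigma_2|||^2\bigl(1+|||C_1^+|||^2\bigr),$$
and by (\ref{eqerrnrm}) it suffices to prove $|||\Sigma_2C_2C_1^+|||^2\le|||\Sigma_2|||^2\,|||C_1^+|||^2$, or even the slightly stronger bound $|||\Sigma_2C_2C_1^+|||\le|||\Sigma_2|||\cdot\|C_1^+\|_2$.

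Next I would peel off the factors one at a time using the mixed submultiplicative inequality $|||AB|||\le|||A|||\cdot\|B\|_2$, which holds for both the spectral and the Frobenius norm. This gives $|||\Sigma_2C_2C_1^+|||\le|||\Sigma_2C_2|||\cdot\|C_1^+\|_2\le|||\Sigma_2|||\cdot\|C_2\|_2\cdot\|C_1^+\|_2$. It then remains to bound $\|C_2\|_2$: since $C_2=V_2^*H$ by (\ref{eqc12}), $V_2$ has orthonormal columns so $\|V_2^*\|_2\le1$, and $\|H\|_2\le1$ by hypothesis in Theorem~\ref{thpert1}, we get $\|C_2\|_2\le\|V_2^*\|_2\,\|H\|_2\le1$. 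Combining the pieces yields $|||\Sigma_2C_2C_1^+|||\le|||\Sigma_2|||\cdot\|C_1^+\|_2\le|||\Sigma_2|||\cdot|||C_1^+|||$, which is exactly what is needed; substituting into (\ref{eqerrnrm}), taking square roots, and dividing by $\tilde\sigma_{r+1}(M)=|||\Sigma_2|||$ completes the proof.

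There is essentially no obstacle here, since this is a direct corollary. The only points requiring a little care are purely cosmetic: the mixed inequality $|||AB|||\le|||A|||\cdot\|B\|_2$ must be applied so that the spectral norm always sits on the factor being absorbed (this way the identical chain of inequalities works verbatim for either choice of $|||\cdot|||$), and the harmless estimate $\|C_1^+\|_2\le|||C_1^+|||$ is invoked only at the very last step so that the bound matches the form stated in (\ref{eqmmr1}).
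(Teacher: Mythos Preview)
Your proof is correct and follows essentially the same route as the paper's, which simply invokes $|||\Sigma_2|||=\tilde\sigma_{r+1}(M)$, $|||C_2|||\le 1$, and the submultiplicative bound $|||\Sigma_2C_2C_1^+|||\le|||\Sigma_2|||\,|||C_2|||\,|||C_1^+|||$ before substituting into (\ref{eqerrnrm}). If anything, your version is slightly more careful than the paper's in keeping track of which factor carries the spectral norm in the mixed inequality $|||AB|||\le|||A|||\cdot\|B\|_2$, so that the argument works verbatim for both the spectral and Frobenius choices of $|||\cdot|||$.
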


\begin{proof}
The corollary follows from (\ref{eqerrnrm}) because
$$|||\Sigma_2|||=\tilde\sigma_{r+1}(M),~ |||C_2|||\le 1,~
{\rm and}~|||\Sigma_2C_2C_1^+|||\le |||\Sigma_2|||~~|||C_2|||~~|||C_1^+|||.$$  
\end{proof}

(\ref{eqmmr1}) implies
that the output {\em LRA} is optimal under both spectral and Frobenius matrix norms up to a factor of $(1+|||C_1^+|||^2)^{1/2}$.


    
\subsection{Impact of pre-multiplication on the errors of  {\em LRA} }\label{simppre}

 
 The following  theorem shows that 
the overall  error bounds of Alg. \ref{alg01}  
are dominated by the product of the norm $|||W|||$ and the
error norm bound  of Alg. \ref{alg1}.

\begin{theorem}\label{thst3}  {\rm See \cite{TYUC17}.} 
Let Alg. \ref{alg01} output  a matrix $XY$ for 
$Y=(FX)^+FM$ and let $m\ge k\ge l=\rank(X)$. Then
\begin{equation}\label{equvmpre}
 M-XY=W(M-XX^+M)~{\rm for}~W=I_m-X(FX)^+F,
\end{equation}
\begin{equation}\label{eqf} 
 |||M-XY|||\le |||W|||~~|||M-XX^+M|||,~~|||W|||\le  
 |||I_m|||+|||X|||~~|||F|||~~|||(FX)^+|||.
\end{equation} 
 \end{theorem}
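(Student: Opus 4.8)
The plan is to derive the identity (\ref{equvmpre}) by direct algebraic manipulation and then obtain (\ref{eqf}) from it by submultiplicativity of the norms together with the triangle inequality. First I would record the standing assumptions: $X$ has full column rank $l$, so $X^+X = I_l$ and $XX^+$ is the orthogonal projector onto the column space of $X$; moreover $m \ge k \ge l = \rank(X)$, which guarantees that $FX \in \mathbb{R}^{k\times l}$ can have full column rank $l$, so $(FX)^+(FX) = I_l$ as well. With $W := I_m - X(FX)^+F$, the first task is to check that $WX = 0$: indeed $WX = X - X(FX)^+FX = X - X\cdot I_l = 0$, using $(FX)^+(FX) = I_l$. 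This is the key cancellation.

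Next I would verify (\ref{equvmpre}). Starting from $M - XY$ with $Y = (FX)^+FM$, write
\begin{equation*}
M - XY = M - X(FX)^+FM = \bigl(I_m - X(FX)^+F\bigr)M = WM.
\end{equation*}
Now insert the Range Finder residual: since $WX = 0$, we have $W\bigl(XX^+M\bigr) = (WX)X^+M = 0$, hence
\begin{equation*}
WM = W M - W(XX^+M) = W\bigl(M - XX^+M\bigr),
\end{equation*}
which is exactly (\ref{equvmpre}). The bound (\ref{eqf}) then follows in two steps: first, $|||M - XY||| = |||W(M - XX^+M)||| \le |||W|||\,\cdot\,|||M - XX^+M|||$ by submultiplicativity (valid for both the spectral and Frobenius norms, as noted after Lemma \ref{lehg}); second, from the definition of $W$ and the triangle inequality, $|||W||| \le |||I_m||| + |||X(FX)^+F||| \le |||I_m||| + |||X|||\,|||(FX)^+|||\,|||F|||$, again by submultiplicativity.

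I do not anticipate a genuine obstacle here — the statement is essentially bookkeeping. The one point requiring care is the use of $(FX)^+(FX) = I_l$: this holds precisely because $FX$ has full column rank $l$, which is why the hypothesis $k \ge l = \rank(X)$ appears (and implicitly one conditions, as in Remark \ref{refllrnk}, on $FX$ having full rank when $F$ is chosen generically, e.g. Gaussian, or chosen to minimize $|||(FX)^+|||$ as in footnote~3). The other mild subtlety is that $|||I_m|||$ is retained symbolically rather than set to $1$, since $|||I_m|||_2 = 1$ but $|||I_m|||_F = \sqrt{m}$; keeping it abstract lets the single statement serve both norms, consistent with the $|||\cdot|||$ convention.
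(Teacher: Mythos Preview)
Your proof is correct and follows essentially the same approach as the paper: both hinge on the identity $(FX)^+FX = I_l$, with the paper expanding $Y = X^+M + (FX)^+F(M - XX^+M)$ and then subtracting, while you equivalently factor $M - XY = WM$ and use $WX = 0$ to insert the projector. Your observation that the hypothesis really needs $\rank(FX) = l$ (not merely $\rank(X) = l$) matches the paper's own phrasing in its proof, and your handling of the norm bounds is identical.
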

 \begin{proof} 
 Recall that $Y=(FX)^+FM$ and notice that 
 $(FX)^+FX=I_l$
 if $k\ge l=\rank(FX)$. 
 Therefore,
 $Y=X^+M+(FX)^+F(M-XX^+M)$. Consequently, (\ref{equvmpre}) and  (\ref{eqf}) hold.
\end{proof} 
 
\begin{remark}\label{reprm}
  Deduce that
$|||W|||\le
|||I_m|||+|||F|||~|||F^+|||~|||X|||~|||X^+|||$ by combining bound (\ref{eqf})
with Lemma \ref{lehg}.
Hence $$|||W|||\le |||I_m|||+1$$ 
 if  both matrices $F$  and $X$ have orthonormal columns. The sketch matrix  $F$ is our choice, and for $X$ we ensure column  orthogonality   by properly choosing the matrix $T$ in Alg. \ref{alg01} 
 (see Remark \ref{re0}).
\end{remark}



 
%

\section{Output error norm bounds for dual  sketching algorithms}\label{serrranin}
 

Given the matrices 
$MHT^{-1}$ and $S^{-1}FM$,  
Alg. \ref{alg01} uses  
$O(kln)$ flops and hence
runs at sublinear cost where $kl\ll m$.
If also $l^2\ll m$ and $k^2\ll n$, then
for proper   Ultrasparse  matrices $F$ and $H$  we can  compute the matrices  $MHT^{-1}$ and $S^{-1}FM$ at sublinear cost as well
and hence can perform entire Algorithm  \ref{alg01} at sublinear cost, involving much less than $mn$ entries of $M$ and other scalars  already where $k\ll m$ and $l\ll n$.  Although such an {\em LRA}  fails for a worst case input (cf. Appendix \ref{shrdin}),  it succeeds
whp in the case of any fixed, possibly sparse, well-conditioned  matrix $H$ of full rank
and a random input matrix $M$ that admits {\em LRA}, as we prove next. In view of the previous subsection we only need to prove this for Alg. \ref{alg1}.
  

\subsection{Auxiliary results}


\begin{lemma}\label{lemma:least_sing_bound}
Suppose that $G$ and $H$ are $r\times n$ and $n\times l$ matrices, respectively,  $r < l < n$, $GH$ has full rank $r$, and  $Q$ is an $r\times n$ matrix with orthonormal rows such that $Q$ and $G$ have  the same row space. Then 
\begin{eqnarray*}
\sigma_r(QH) \ge \frac{\sigma_r(GH)}{\sigma_1(G)}.
\end{eqnarray*}
\end{lemma}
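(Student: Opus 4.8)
The plan is to realize $G$ as an invertible left multiple of $Q$ and then push that factorization through $H$. Since $Q$ has orthonormal rows and the same row space as $G$, the matrix $Q^TQ$ is the orthogonal projector onto that common row space, so $GQ^TQ=G$; hence $C:=GQ^T\in\mathbb R^{r\times r}$ satisfies $CQ=G$. Because $\rank(GH)=r$ forces $\rank(G)=r$ (equivalently, $Q$ has $r$ linearly independent rows spanning the row space of $G$), the factor $C$ has rank $r$ and is therefore invertible. Multiplying $CQ=G$ by $H$ on the right gives $QH=C^{-1}GH$.

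Next I would bound $\sigma_r(QH)$ from below by the elementary inequality $\sigma_r(AB)\ge\sigma_r(A)\,\sigma_r(B)$, valid for $A\in\mathbb R^{r\times r}$ and $B\in\mathbb R^{r\times l}$ with $r\le l$. This follows from the variational characterization $\sigma_r(AB)=\min_{\|x\|_2=1}\|(AB)^Tx\|_2$ together with $\|B^TA^Tx\|_2\ge\sigma_r(B)\,\|A^Tx\|_2\ge\sigma_r(B)\,\sigma_r(A)\,\|x\|_2$, where I use that $B^TB$-type Rayleigh quotients are bounded below by $\sigma_r(B)^2$ and similarly for $A$. Here $\sigma_r$ is the smallest singular value in each case because $GH$ and $QH$ are $r\times l$ with $r<l$. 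Applying this with $A=C^{-1}$ and $B=GH$ yields $\sigma_r(QH)=\sigma_r(C^{-1}GH)\ge\sigma_r(C^{-1})\,\sigma_r(GH)=\sigma_r(GH)/\sigma_1(C)$, using $\sigma_r(C^{-1})=1/\sigma_1(C)$.

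Finally I would compare $\sigma_1(C)$ with $\sigma_1(G)$: from $C=GQ^T$ and $\|Q^T\|_2=1$ (orthonormal rows of $Q$) we get $\sigma_1(C)=\|GQ^T\|_2\le\|G\|_2\,\|Q^T\|_2=\sigma_1(G)$; in fact $CC^T=GQ^TQG^T=GG^T$ gives $\sigma_1(C)=\sigma_1(G)$, but the inequality already suffices. Chaining the two estimates gives $\sigma_r(QH)\ge\sigma_r(GH)/\sigma_1(C)\ge\sigma_r(GH)/\sigma_1(G)$, as desired.

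I do not expect a genuine obstacle; the only point requiring care is the \emph{direction} of the singular-value estimate. One must multiply the identity $G=C\,(QH)$ on the left by the invertible matrix $C^{-1}$ to reach $QH=C^{-1}GH$, rather than invoke Lemma \ref{lehg} on the product $GH=C\,(QH)$: the latter bounds $\|(GH)^+\|$ above and hence only yields an \emph{upper} bound on $\sigma_r(QH)$ (in terms of $\sigma_r(G)$), which is the wrong quantity. The secondary bit of bookkeeping is that for the wide full-rank matrices $QH$ and $GH$ the $r$-th singular value coincides with the least singular value, so that $\sigma_{\min}$ and $\sigma_r$ may be used interchangeably throughout.
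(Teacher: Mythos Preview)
Your argument is correct and follows essentially the same route as the paper: write $G=CQ$ with $C\in\mathbb R^{r\times r}$ invertible (the paper calls this factor $R$ and obtains it simply ``without loss of generality,'' whereas you construct it explicitly as $C=GQ^T$), deduce $QH=C^{-1}GH$, apply $\sigma_r(C^{-1}GH)\ge\sigma_r(C^{-1})\,\sigma_r(GH)$, and finish with $\sigma_r(C^{-1})=1/\sigma_1(C)=1/\sigma_1(G)$ from $CC^T=GG^T$. The paper's proof is terser but structurally identical; your additional justification of the submultiplicativity step and the explicit identification $CC^T=GG^T$ are welcome details that the paper leaves implicit.
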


\begin{proof}
Without loss of generality, suppose that $R\in \mathbb  R^{r\times r}$ and $G = RQ$. Then
\begin{eqnarray*}
\sigma_r(QH) = \sigma_r(R^{-1}GH) \ge \sigma_r(GH)\cdot \sigma_r(R^{-1}).
\end{eqnarray*}
Hence
 $\sigma_r(R^{-1}) = 1/\sigma_1(G)$
 because the matrices $R$ and $G$ share their singular values.
\end{proof}

Hereafter  write  $e:=2.7182818\dots$.
\begin{lemma}\label{lemma:gauss_least_bound} 
Suppose that $G$ is an $r\times n$  Gaussian  
matrix, $H$ is an  $n\times l$ matrix with orthonormal columns,
 $n > l > r$, $l \ge 4$, 
 $Q$ is a matrix with orthonormal rows, and  $Q$ and $G$ share their row space. 
Fix two positive parameters $t_1$ and $t_2<1$. 
Then  
\begin{eqnarray*}\label{eqpr}
\sigma_r(QH) \ge  t_2 \cdot \frac{\sqrt{l}- \sqrt{r/l}+ \sqrt{1/l}}{t_1 + \sqrt{r} + \sqrt{n} }\cdot \frac{1}{e} \
\end{eqnarray*}
with a probability no less than $1 - \exp (-t_1^2/2) - (t_2)^{l-r}$.
\end{lemma}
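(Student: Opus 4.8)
The plan is to combine the deterministic bound of Lemma \ref{lemma:least_sing_bound} with two independent tail estimates for a Gaussian matrix: an upper bound on its largest singular value and a lower bound on the smallest singular value of a related ``projected'' Gaussian matrix. First I would invoke Lemma \ref{lemma:least_sing_bound} to reduce the problem to bounding $\sigma_r(GH)$ from below and $\sigma_1(G)$ from above, since $\sigma_r(QH) \ge \sigma_r(GH)/\sigma_1(G)$. For the numerator, the key observation is that $H$ has orthonormal \emph{columns}, so by Orthogonal Invariance (Lemma \ref{lepr3}) applied on the right, $GH$ is itself an $r \times l$ Gaussian matrix. Hence $\sigma_r(GH)$ is the smallest singular value of an $r \times l$ Gaussian matrix with $l > r$, and I would apply the standard lower tail bound for the least singular value of a rectangular Gaussian matrix (the kind recalled in Appendix \ref{snrmg}, e.g. the Chen--Dongarra or Rudelson--Vershynin type estimate): $\sigma_r(GH) \ge \bigl(\sqrt{l} - \sqrt{r-1}\bigr)\bigl/$ something, or more precisely a bound of the shape $\prob\bigl(\sigma_r(GH) \le t_2 (\sqrt{l} - \sqrt{r/l} + \sqrt{1/l})/e\bigr) \le t_2^{\,l-r}$ for $t_2 < 1$ and $l \ge 4$. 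The precise constants $\sqrt{l} - \sqrt{r/l} + \sqrt{1/l}$ and the factor $1/e$ must be matched to whichever Gaussian least-singular-value inequality is stated in the appendix; I would cite that inequality and substitute.

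Next, for the denominator, I would use the standard upper tail bound on the largest singular value of an $r \times n$ Gaussian matrix $G$: $\prob\bigl(\sigma_1(G) \ge t_1 + \sqrt{r} + \sqrt{n}\bigr) \le \exp(-t_1^2/2)$, which follows from Gaussian concentration of the spectral norm around its mean $\le \sqrt{r} + \sqrt{n}$ (again this is recalled in Appendix \ref{snrmg}). Combining, on the intersection of the two favorable events we get
\begin{eqnarray*}
\sigma_r(QH) \;\ge\; \frac{\sigma_r(GH)}{\sigma_1(G)} \;\ge\; t_2 \cdot \frac{\sqrt{l} - \sqrt{r/l} + \sqrt{1/l}}{t_1 + \sqrt{r} + \sqrt{n}} \cdot \frac{1}{e},
\end{eqnarray*}
and a union bound gives the claimed failure probability $\exp(-t_1^2/2) + t_2^{\,l-r}$. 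One small point to check is that we may condition on $GH$ having full rank $r$ (which holds with probability $1$ by Theorem \ref{thrnd} / Remark \ref{refllrnk}), so that Lemma \ref{lemma:least_sing_bound} applies verbatim; I would note this.

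The main obstacle is bookkeeping of constants rather than any conceptual difficulty: I need the exact Gaussian least-singular-value bound from Appendix \ref{snrmg} to produce precisely the factor $(\sqrt{l} - \sqrt{r/l} + \sqrt{1/l})/e$ with tail $t_2^{\,l-r}$, and verifying that the hypotheses $n > l > r$ and $l \ge 4$ are exactly what that cited inequality requires (the $l \ge 4$ and the somewhat unusual form $\sqrt{l} - \sqrt{r/l} + \sqrt{1/l}$ strongly suggest a specific normalization, perhaps after dividing through by $\sqrt{l}$). A secondary subtlety is making sure the two events — one depending on $GH$, one on $G$ — are handled by a plain union bound without needing independence, which is fine since we only need $\prob(A^c \cup B^c) \le \prob(A^c) + \prob(B^c)$. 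Everything else is a direct substitution.
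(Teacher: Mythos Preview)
Your proposal is correct and follows essentially the same route as the paper: reduce via Lemma~\ref{lemma:least_sing_bound}, use orthogonal invariance (Lemma~\ref{lepr3}) to identify $GH$ as an $r\times l$ Gaussian matrix, apply Theorem~\ref{thsignorm} for the upper tail of $\sigma_1(G)$ and Theorem~\ref{thsiguna}(ii) for the lower tail of $\sigma_r(GH)$, and finish with a union bound after conditioning on full rank. The constant you were unsure about comes directly from Theorem~\ref{thsiguna}(ii): substituting $t=1/t_2$ there yields $\prob\bigl\{\sigma_r(GH)\le t_2\,\frac{l-r+1}{e\sqrt{l}}\bigr\}\le t_2^{\,l-r}$, and $\frac{l-r+1}{\sqrt{l}}$ is exactly the numerator in the lemma's bound.
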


\begin{proof}
The matrix  $GH$ has  distribution of an $r\times l$ Gaussian 
   random 
 matrix by virtue of 
 Lemma \ref{lepr3},
 and hence we can  assume that it has full rank (see Remark
\ref{refllrnk}).
Now recall Thm. \ref{thsignorm} and claim (ii) of
Thm. \ref{thsiguna} and obtain    
\begin{eqnarray*}
\prob \{ \sigma_1(G) > t_1 + \sqrt{r} + \sqrt{n}  \} < \exp(-t_1^2/2)~{\rm for}~t_1 \ge 0~{\rm and}
\end{eqnarray*}
\begin{eqnarray*}
\prob \Big\{ \sigma_r(GH) \le t_2\cdot\frac{l - r + 1}{e\sqrt{l}}  \Big\} \le (t_2)^{l - r}~{\rm for}~t_2 < 1~{\rm and}~l \ge 4. 
\end{eqnarray*}
Combine the latter  two inequalities, 
the union  
bound,
and Lemma \ref{lemma:least_sing_bound}, and obtain  Lemma \ref{lemma:gauss_least_bound}.                                                                                                                                                                                                                                                                                                                                                                                                                                                                                                                                                                                                                                                                                                                                                                                                                                                                                                                                                                                                                                                                                              
\end{proof}

Lemma \ref{lemma:gauss_least_bound} implies that   $\sigma_r(QH)$ has at least order of $\sqrt{l/n}$ whp. 

\begin{corollary}\label{coro:least_sing_bound}
For $n, l, r, G, Q$, and  $H$ of Lemma \ref{lemma:gauss_least_bound},  let 
$n >  36r$  and  $ l > 22(r - 1)$. Then 
\begin{eqnarray*}
\prob \big\{  \sigma_r(QH) \ge \frac{1}{4} \sqrt{l/n}  \big\} > 1 - \exp\Big(-\frac{n}{72}\Big) - \exp\Big(-\frac{l-r}{20}\Big).
\end{eqnarray*} 
\end{corollary}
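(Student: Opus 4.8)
The plan is to derive Corollary \ref{coro:least_sing_bound} directly from Lemma \ref{lemma:gauss_least_bound} by a careful choice of the free parameters $t_1$ and $t_2$, followed by elementary numerical estimates. First I would set $t_1 = \sqrt{n}$. Then the denominator $t_1 + \sqrt{r} + \sqrt{n} = 2\sqrt{n} + \sqrt{r}$, and using $n > 36r$ (so $\sqrt{r} < \sqrt{n}/6$) this is at most $(2 + \tfrac{1}{6})\sqrt{n} = \tfrac{13}{6}\sqrt{n}$. Also $\exp(-t_1^2/2) = \exp(-n/2)$, which is comfortably below $\exp(-n/72)$.

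Next I would handle the numerator $\sqrt{l} - \sqrt{r/l} + \sqrt{1/l}$. Since $\sqrt{1/l} > 0$ we may drop it from below; it remains to bound $\sqrt{l} - \sqrt{r/l} = \tfrac{1}{\sqrt l}(l - r)$ — wait, more precisely $\sqrt l - \sqrt{r/l} = (l-r)/\sqrt l$ only if $r$ were $\sqrt{r}\cdot\sqrt{r}$; rather $\sqrt{l} - \sqrt{r/l} = (l - \sqrt{lr}\cdot? )$, so instead I would just note $\sqrt{r/l} \le \sqrt{l}/\sqrt{22}$ because $l > 22(r-1) \ge 22r/2$ is too weak; better, from $l > 22(r-1)$ one gets (for $r \ge 1$) that $r/l < r/(22(r-1)) \le 1/11$ when $r \ge 2$, and for $r = 1$ the term $\sqrt{r/l}$ is $\sqrt{1/l}$ which cancels against $+\sqrt{1/l}$. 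So in all cases the numerator is at least $\sqrt{l}(1 - 1/\sqrt{11}) > \sqrt{l}/2$. Combining with the denominator bound and the factor $1/e$, we get
\begin{eqnarray*}
\sigma_r(QH) \ge t_2 \cdot \frac{\sqrt{l}/2}{(13/6)\sqrt{n}} \cdot \frac{1}{e} = t_2 \cdot \frac{3}{13e}\sqrt{l/n}.
\end{eqnarray*}
Since $3/(13e) > 3/(13\cdot 2.719) > 0.0848$, choosing $t_2$ close enough to $1$ — concretely I would take $t_2$ so that $t_2 \cdot 3/(13e) \ge 1/4$ fails; that needs $t_2 \ge 13e/12 > 1$, which is not allowed. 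So the constant $3/(13e) \approx 0.085$ is too small for the target $1/4$: I must be sharper on the denominator. The fix is to choose $t_1$ smaller, say $t_1 = \sqrt{n/36}= \sqrt n/6$, giving denominator $\le (\tfrac16 + \tfrac16 + 1)\sqrt n = \tfrac{4}{3}\sqrt n$ and $\exp(-t_1^2/2) = \exp(-n/72)$ exactly. Then $\sigma_r(QH) \ge t_2 \cdot \frac{\sqrt l/2}{(4/3)\sqrt n}\cdot\frac1e = t_2\cdot\frac{3}{8e}\sqrt{l/n}$, and $3/(8e) \approx 0.138$ — still short of $1/4$. So I need to also improve the numerator constant using the $+\sqrt{1/l}$ term and a sharper bound on $\sqrt{r/l}$, and possibly use $l \ge 4$ to replace the crude $1 - 1/\sqrt{11}$; pushing the numerator to roughly $0.9\sqrt l$ and using $t_2 = e^{-(l-r)/20}$'s base... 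Actually the clean route: pick $t_2$ with $(t_2)^{l-r} = \exp(-(l-r)/20)$, i.e. $t_2 = e^{-1/20} \approx 0.951$. The main obstacle is precisely this constant-chasing: verifying that with $t_1 = \sqrt{n}/6$, $t_2 = e^{-1/20}$, and the hypotheses $n > 36r$, $l > 22(r-1)$, $l \ge 4$, the product genuinely exceeds $\tfrac14\sqrt{l/n}$. I expect one needs the full numerator $\sqrt l - \sqrt{r/l} + \sqrt{1/l}$ bounded below by about $\tfrac{7}{8}\sqrt l$ (using $r/l < 1/11$ for $r\ge 2$ and exact cancellation for $r=1$), giving $\sigma_r(QH) \ge 0.951 \cdot \frac{7/8}{4/3}\cdot\frac{1}{e}\sqrt{l/n} \ge 0.951\cdot 0.657\cdot 0.368\sqrt{l/n} \approx 0.23\sqrt{l/n}$, which is marginally below $1/4$; so in writing this up I would either absorb a slightly larger numerator constant from the $l\ge 4$ refinement or accept that the cleanest honest constant is obtained by tightening $t_1$ a touch. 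The probability bound then reads $1 - \exp(-n/72) - \exp(-(l-r)/20)$ as claimed, since the union bound in Lemma \ref{lemma:gauss_least_bound} gives exactly $1 - \exp(-t_1^2/2) - (t_2)^{l-r}$. I would finish by collecting these estimates into the stated inequality.
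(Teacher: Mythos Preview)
Your overall strategy---substitute specific values of $t_1$ and $t_2$ into Lemma \ref{lemma:gauss_least_bound} and verify both the singular-value bound and the probability bound---is exactly what the paper does. But as you yourself observe, your particular choices ($t_1=\sqrt{n}/6$, $t_2=e^{-1/20}$) leave you stuck at roughly $0.23\sqrt{l/n}$, short of the target $\tfrac14\sqrt{l/n}$, and the proposal ends without closing this gap. That is the genuine deficiency: the constant-chasing does not go through with parameters chosen independently of $l,r,n$.

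The paper resolves this by letting $t_1$ and $t_2$ depend on the problem dimensions so that the singular-value bound becomes an \emph{equality}. Concretely, set $t_1=\tfrac13\sqrt{n}-\sqrt{r}$, so that the denominator $t_1+\sqrt{r}+\sqrt{n}$ is \emph{exactly} $\tfrac43\sqrt{n}$; and set $t_2=\tfrac{le}{3(l-r+1)}$, so that $t_2\cdot\frac{l-r+1}{e\sqrt l}=\tfrac{\sqrt l}{3}$ exactly (recall that the numerator in Lemma \ref{lemma:gauss_least_bound} comes from the bound $\sigma_r(GH)\ge t_2\,(l-r+1)/(e\sqrt l)$). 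The lower bound then collapses to $\frac{\sqrt l/3}{(4/3)\sqrt n}=\tfrac14\sqrt{l/n}$ with no slack to fight over. The hypotheses are used only for the probability side: $n>36r$ gives $t_1>\sqrt n/6$, hence $\exp(-t_1^2/2)<\exp(-n/72)$; and $l>22(r-1)$ gives $t_2<0.95<e^{-1/20}$, hence $(t_2)^{l-r}<\exp(-(l-r)/20)$. This is the missing maneuver in your write-up.
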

\begin{proof}  
Write $t_1: = \frac{1}{3}\sqrt{n} - \sqrt{r}$ and $t_2: = \frac{1}{3}~\frac{le}{l-r+1}$, recall that 
$n >  36r$  and  $ l > 22(r - 1)$, 
and then readily verify that $t_1 > \frac{\sqrt{n}}{6}$ and $\exp(0.05) > 0.95 > t_2 > 0$. Finally apply Lemma \ref{lemma:gauss_least_bound} under these bounds on $t_1$ and $t_2$.
\end{proof}

\begin{remark}\label{remark:least_sing_bound} 
We   
can extend
 our lower bounds of  Lemma \ref{lemma:gauss_least_bound} and Cor. \ref{coro:least_sing_bound} on 
$\sigma_r(QH)$   to the case 
of any matrix $H$  of full rank $l$ if we decrease these bounds
by a factor of $\sigma_l(H)$. 
\end{remark}


  
\subsection{Output errors of Alg. \ref{alg1}
for a matrix with random singular space}\label{sranssp}
 

\begin{assumption}\label{assump:mat_rand_space}
Let $r < n \le m$
(we can readily extend our study to the case where  $m < n$). Fix two constant matrices 
$$
\Sigma_r = 
\diag(\sigma_j)_{j=1}^r~{\rm and }~
\Sigma_{\perp} = 
\begin{bmatrix}
\diag(\sigma_j)_{j=r+1}^n\\
O_{m-n,n-r}
\end{bmatrix}\in \mathbb R^{(m-r)\times (n-r)}
$$
such that $\sigma_1\ge \sigma_2\ge \dots \ge \sigma_n \ge 0$, and $\sigma_r > 0$.
Let $G$ be an $r\times n$ Gaussian 
matrix 
and 
let $Q\in\mathbb R^{n\times r}$ and $Q_{\perp}\in\mathbb R^{(n-r)\times r}$ be  two matrices whose column sets 
make up orthonormal bases of the row space of $G$ and its orthogonal complement, respectively.
Furthermore, let 
\begin{equation}\label{eqassmp}
M = U\cdot\begin{bmatrix}\Sigma_r & 0\\ 0 & \Sigma_\perp \end{bmatrix} \cdot\begin{bmatrix}Q^T\\Q_\perp^T\end{bmatrix}
\end{equation}
be SVD where the
right
 singular space of $M$ is  random
 and $U$ is a  matrix
 with orthonormal columns.
\end{assumption}

\begin{remark}
 The matrix $G$ does not  uniquely define  
the matrices $Q$, $Q_{\perp}$, and $U$
under Assumption \ref{assump:mat_rand_space}  
and hence does not  uniquely define the matrix $M$, but this is immaterial
for our analysis. 
\end{remark}

\begin{theorem}\label{eqtherr} {\rm [Errors of Alg. \ref{alg1} for an input with a random singular space.]}
Suppose that $G$ is an $r\times n$ Gaussian  
matrix,
 $H\in\mathbb R^{n\times l}$ is a constant matrix, $n > 36r$, $l>22(r-1)$,   $r \le l < \min (m, n)$,
and Alg. \ref{alg1} applied  to the matrix $M$ of (\ref{eqassmp}) outputs  two  matrices $X$ and $Y$.

(i) If   $H$ has orthonormal columns,
then 
\begin{eqnarray*}
|||M - XY |||/\tilde \sigma_{r+1}(M) \le \sqrt{1 + 16n/l}
\end{eqnarray*}
with a probability no less than
$1 - \exp(-\frac{n}{72}) - \exp(-\frac{l-r}{20})$.

(ii) If   $H$ has full rank $l$, then
\begin{eqnarray*}
|||M - XY |||/\tilde \sigma_{r+1}(M) \le \sqrt{1 + 16\kappa^2(H)n/l}
\end{eqnarray*}
 with a probability no less than
$1 - \exp(-\frac{n}{72}) - \exp(-\frac{l-r}{20})$.
%
%

\end{theorem}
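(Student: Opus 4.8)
The plan is to reduce the theorem to the deterministic estimate of Corollary \ref{copert0} (equivalently, to the bound (\ref{eqerrnrm})) and then to control the only random quantity that survives, namely $\|C_1^+\|_2$, by invoking the lower bound on $\sigma_r(Q^TH)$ from Corollary \ref{coro:least_sing_bound}. First I would unwind Assumption \ref{assump:mat_rand_space}: the right factor in (\ref{eqassmp}) is an $n\times n$ orthogonal matrix and the diagonal entries $\sigma_1\ge\dots\ge\sigma_n\ge 0$ are already sorted, so (\ref{eqassmp}) is a genuine SVD of $M$ and the $r$ top right singular vectors of $M$ are the columns of $Q$. Hence in the notation of Theorem \ref{thpert1} we may take $V_1=Q$, so that $C_1=V_1^*H=Q^TH$ (up to a positive scalar, after the normalization described below) and $C_2=Q_\perp^TH$. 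Writing $G=R\,Q^T$ with $R$ the a.s.\ invertible $r\times r$ matrix such that $G$ and $Q^T$ share their row space, we get $GH=RC_1$; and if $H_0$ denotes the orthonormal-column factor of $H$, then $GH_0$ has the distribution of an $r\times l$ Gaussian matrix by Lemma \ref{lepr3}, hence has rank $r$ with probability $1$ since $l>r$, and therefore $\operatorname{rank}(C_1)=\operatorname{rank}(GH)=r$ a.s.\ (cf. Theorem \ref{thrnd}); we condition on this throughout (Remark \ref{refllrnk}).

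Next I would record the deterministic reduction. By Remark \ref{realg1} the Range Finder output equals $XY=MH(MH)^{+}M$, which is invariant under replacing $H$ by $H/\|H\|_2$. Applying Theorem \ref{thpert1} to this unit-spectral-norm test matrix and retracing the proof of Corollary \ref{copert0}, but keeping \emph{spectral} norms on the factors $C_2$ and $C_1^+$ in the estimate of $|||\Sigma_2C_2C_1^{+}|||$ — using $|||\Sigma_2C_2C_1^{+}|||\le|||\Sigma_2|||\cdot\|C_2\|_2\cdot\|C_1^{+}\|_2$ together with $\|C_2\|_2=\|Q_\perp^TH\|_2/\|H\|_2\le 1$ and $|||\Sigma_2|||=\tilde\sigma_{r+1}(M)$ — I obtain, for both the spectral and the Frobenius norm,
\[
|||M-XY|||\big/\tilde\sigma_{r+1}(M)\ \le\ \bigl(1+\|C_1^{+}\|_2^{2}\bigr)^{1/2},\qquad C_1=Q^TH/\|H\|_2
\]
(in case (i), $\|H\|_2=1$ and $C_1=Q^TH$). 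So it suffices to show $\|C_1^{+}\|_2^{2}\le 16n/l$ in case (i) and $\|C_1^{+}\|_2^{2}\le 16\kappa^2(H)n/l$ in case (ii), each on an event of probability at least $1-\exp(-n/72)-\exp(-(l-r)/20)$.

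Finally I would apply Corollary \ref{coro:least_sing_bound}: $Q^T$ is $r\times n$ with orthonormal rows and shares its row space with the Gaussian matrix $G$, and the hypotheses $n>36r$, $l>22(r-1)$, $r\le l<n$ are exactly those required there. In case (i), $H$ has orthonormal columns, so $\sigma_r(C_1)=\sigma_r(Q^TH)\ge\tfrac14\sqrt{l/n}$ with the stated probability, whence $\|C_1^{+}\|_2=1/\sigma_r(C_1)\le 4\sqrt{n/l}$ and $\|C_1^{+}\|_2^2\le 16n/l$. In case (ii), factor $H=H_0\Sigma_HW^{*}$ with $H_0$ of orthonormal columns, $\Sigma_H$ the $l\times l$ diagonal of singular values of $H$, and $W$ orthogonal; then $\sigma_r(Q^TH)\ge\sigma_r(Q^TH_0)\,\sigma_l(H)\ge\tfrac14\sqrt{l/n}\,\sigma_l(H)$ with the stated probability — this is exactly the extension recorded in Remark \ref{remark:least_sing_bound} — so $\|C_1^{+}\|_2=\|H\|_2/\sigma_r(Q^TH)\le 4\sqrt{n/l}\,\sigma_1(H)/\sigma_l(H)=4\sqrt{n/l}\,\kappa(H)$ and $\|C_1^{+}\|_2^2\le 16\kappa^2(H)n/l$. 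Substituting into the displayed inequality completes both parts.

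The only points that need care, none of them deep, are two. The Frobenius case must be treated by keeping the \emph{spectral} norms of $C_2$ and $C_1^{+}$ when bounding $|||\Sigma_2C_2C_1^{+}|||$; using Frobenius norms there would cost a spurious factor $\sqrt r$, since Corollary \ref{coro:least_sing_bound} controls only the \emph{smallest} singular value of $Q^TH$. And the passage from an orthonormal-column $H$ (as in Corollary \ref{coro:least_sing_bound}) to a general full-rank $H$ must be bookkept so that the factor $\sigma_l(H)$ from Remark \ref{remark:least_sing_bound} combines with the normalization $\|H\|_2=\sigma_1(H)$ forced by Theorem \ref{thpert1} to produce exactly $\kappa(H)$; this uses the invariance of $MH(MH)^{+}M$ under positive scaling of $H$. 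With these observations the theorem is essentially a corollary of Corollaries \ref{copert0} and \ref{coro:least_sing_bound}.
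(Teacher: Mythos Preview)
Your proposal is correct and follows essentially the same route as the paper: identify $V_1=Q$, apply the deterministic bound of Theorem \ref{thpert1}/Corollary \ref{copert0}, and control $\sigma_r(Q^TH)$ via Corollary \ref{coro:least_sing_bound} (and Remark \ref{remark:least_sing_bound} for the full-rank case). The only cosmetic difference is in part (ii): the paper returns directly to the bound $\sqrt{1+\|Q_\perp^TH\,C_1^+\|_2^2}$ from Theorem \ref{thpert1} and estimates $\|Q_\perp^TH\,C_1^+\|_2\le \|H\|_2\cdot\|C_1^+\|_2$, whereas you first rescale $H$ to unit spectral norm and then apply Corollary \ref{copert0}; both bookkeepings yield the same factor $\kappa(H)$, and your explicit remark about keeping \emph{spectral} norms on $C_2$ and $C_1^+$ in the Frobenius case is exactly what the paper does implicitly when it writes $\sqrt{1+(\|C_1^+\|_2)^2}$.
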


\begin{proof} We can assume that  the matrices $G$ and $GH$ have full rank $r$ (see Thm. \ref{thrnd} and Remark \ref{refllrnk}).
Consider SVD 
\begin{eqnarray*}
M = U\cdot\begin{bmatrix}\Sigma_r & 0\\ 0 & \Sigma_\perp \end{bmatrix} \cdot\begin{bmatrix}Q^T\\Q_\perp^T\end{bmatrix},
\end{eqnarray*}
  write $C_1 := Q^TH$, apply  Cor. \ref{copert0}, and deduce that
$$
|||M - XY |||/\tilde\sigma_{r+1}(M) \le \sqrt{1 + (||C_1^+||_2)^2}.
$$
(i) Recall from Cor. \ref{coro:least_sing_bound} that
\begin{eqnarray}
\prob \big\{  \sigma_r(C_1) = \sigma_r(Q^TH) \ge \frac{1}{4} \sqrt{l/n}  \big\} > 1 - \exp\Big(-\frac{n}{72}\Big) - \exp\Big(-\frac{l-r}{20}\Big).\label{ineq:rand_sub}
\end{eqnarray} 

\noindent (ii) Let $H = U_H\Sigma_HV_H^T$ be a compact SVD. Then
$\sigma_r(C_1)\ge \sigma_r(Q^TU_H)\sigma_r(H)$. 

Similarly to (\ref{ineq:rand_sub}) obtain that
\begin{align*}
\prob \big\{  \sigma_r(C_1) \ge \frac{1}{4} \sqrt{l/n} \cdot\sigma_r(H)  \big\} &\ge \prob \big\{  \sigma_r(Q^TU_H) \ge \frac{1}{4} \sqrt{l/n}\big\}\\
&> 1 - \exp\Big(-\frac{n}{72}\Big) - \exp\Big(-\frac{l-r}{20}\Big).
\end{align*}
Combine Thm.  \ref{thpert1}, equation $||Q_\perp||_2 = 1$, and the bound $\sigma_r(C_1) \ge \frac{1}{4} \sqrt{l/n} \cdot\sigma_r(H)$ and obtain
\begin{eqnarray*}
|||M - XY |||/\tilde \sigma_{r+1}(M) \le \sqrt{1 + ||Q_\perp^THC_1^+ ||_2^2} \le \sqrt{1 + 16\kappa^2(H)n/l}.
\end{eqnarray*}
\end{proof}

 \subsection{Output errors of Alg. \ref{alg1} for a perturbed factor-Gaussian input}\label{serrrang}
 

\begin{assumption}\label{assmp2} 
For  an $r\times n$ Gaussian  
matrix $G$ and a constant matrix
$A\in \mathbb R^{m\times r}$   of full rank $r< \min (m, n)$, define the matrices
 $B: = \frac{1}{\sqrt{n}}\cdot G$ 
 and  $\Tilde{M}: = AB$ and 
  call $M: = \Tilde{M} + E$ a {\em perturbed  
   right factor-Gaussian 
   matrix} if the
Frobenius norm of a perturbation matrix $E$ is   sufficiently small in comparison to $\sigma_r(A)$.   
\end{assumption}


\begin{theorem}\label{therrfctr} 
Given an $r\times n$  Gaussian  random matrix $G$ and constant matrices
$H\in\mathbb R^{n\times l}$, $A\in\mathbb R^{m\times r}$, and $E \in\mathbb R^{m\times n}$ for $r\le l < \min (m ,n)$,
let $n > 36r$ and $l>22(r-1)$, let $\tilde M$ be a right factor-Gaussian  matrix
 of Assumption \ref{assmp2},
let $M = \Tilde{M} + E$  for a perturbation matrix $E$,
and let Alg. \ref{alg1}  applied to the  matrix $M$ output two matrices
 $X$ and $Y$. 

(i) If the matrix $H$ has orthonormal columns 
and  if $||E||_F \le \frac{\sigma_r(A)}{48\sqrt{n/l} + 6}$, then
$$
|||M-XY|||/\tilde \sigma_{r+1}(M) \le \sqrt{1+ 100~n/l}
$$
with a probability no less than $1 - \exp(-\frac{l-r}{20}) - \exp(-\frac{n-r}{20}) - \exp(-\frac{n}{72})$.
  
(ii) Let $\kappa(H) = ||H||_2||H^+||_2$ denote the spectral condition number of $H$. If $H$ has full rank and if  $||E||_F \le \frac{\sigma_r(A)}{12}\min(1,  \frac{1}{4\sqrt{n/l}\cdot\sigma_l(H) + 0.5})$,
 then
$$
|||M-XY|||/\tilde \sigma_{r+1}(M)\le \sqrt{1 + 100~\kappa^2(H)n/l } 
$$ 
with a probability no less than $1 - \exp(-\frac{l-r}{20}) - \exp(-\frac{n-r}{20}) - \exp(-\frac{n}{72}).$

\end{theorem}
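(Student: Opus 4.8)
The plan is to run the argument of Theorem \ref{eqtherr} with one extra layer inserted: for a perturbed factor-Gaussian $M=\tilde M+E$ the leading right singular subspace of $M$ is no longer the row space of $G$, but only a small perturbation of it, and the hypotheses on $\|E\|_F$ are exactly what is needed to keep that perturbation under control. First I would reduce to a single lower bound on $\sigma_r(C_1)$ with $C_1=V_1^*H$, where $V_1$ carries the $r$ top right singular vectors of $M$. Combining Theorem \ref{thpert1} with $|||\Sigma_2C_2C_1^+|||\le|||\Sigma_2|||\,\|C_2C_1^+\|_2$, $\|C_2\|_2=\|V_2^*H\|_2\le\|H\|_2$, and $\tilde\sigma_{r+1}(M)=|||\Sigma_2|||$ gives, for both norms,
\[
|||M-XY|||/\tilde\sigma_{r+1}(M)\le\sqrt{1+\|H\|_2^2\,\|C_1^+\|_2^2}=\sqrt{1+\|H\|_2^2/\sigma_r(C_1)^2},
\]
(as in Corollary \ref{copert0}), so it suffices to show $\sigma_r(C_1)\ge\tfrac1{10}\sqrt{l/n}$ in case (i) (where $\|H\|_2=1$) and $\sigma_r(C_1)\ge\tfrac1{10}\sigma_l(H)\sqrt{l/n}$ in case (ii), on an event of the stated probability; note $\|H\|_2/(10\sigma_l(H)\sqrt{l/n})=10\kappa(H)\sqrt{n/l}$, which yields the $\kappa^2(H)$ in (ii).

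Next I would pin down the structure of $\tilde M=AB$ with $B=G/\sqrt n$: it has rank exactly $r$, and since $A$ has full column rank its row space equals the row space of $G$, i.e.\ $\mathrm{range}(Q)$, where $Q\in\mathbb R^{n\times r}$ has orthonormal columns spanning $\mathrm{Span}(G^T)$ and $Q_\perp\in\mathbb R^{n\times(n-r)}$ spans the orthogonal complement. To pass from $Q$ to $V_1$ I would invoke Lemma \ref{lemma:pert_sing_space} with the rank-$r$ matrix $\tilde M$ playing the role of its ``$M$'' and with our $E$ as the perturbation, so that its ``$\tilde M=M+E$'' is our $M$. This needs $\delta:=\sigma_r(\tilde M)-2\|E\|_2>0$ and $\|E\|_F\le\delta/2$, and it produces $P\in\mathbb R^{(n-r)\times r}$ with $\|P\|_F<2\|E\|_F/\delta$ such that the columns of $\tilde V=Q+Q_\perp P$ span $\mathrm{range}(V_1)$; by Remark \ref{remark:pert_sing_space} we may take $V_1=\tilde V(I_r+P^TP)^{-1/2}$. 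To check the hypotheses I would lower-bound $\sigma_r(\tilde M)=\sigma_r(AB)\ge\sigma_r(A)\sigma_r(B)=\sigma_r(A)\sigma_r(G)/\sqrt n$ and then apply the Gaussian lower tail (claim (ii) of Theorem \ref{thsiguna} in Appendix \ref{snrmg} with the identity as test matrix): for $n>36r$ this gives $\sigma_r(G)\ge\tfrac13\sqrt n$, hence $\sigma_r(\tilde M)\ge\tfrac13\sigma_r(A)$, with probability at least $1-\exp(-(n-r)/20)$. The assumed bounds on $\|E\|_F$ are calibrated precisely so that $2\|E\|_F$ is a controlled fraction of $\tfrac13\sigma_r(A)$: this legitimizes Lemma \ref{lemma:pert_sing_space} and forces $\|P\|_F$ to be small --- at most a small constant times $\sqrt{l/n}$ in case (i), and $O\big(1/(\sqrt{n/l}\,\sigma_l(H))\big)$ in case (ii).

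Then I would estimate $\sigma_r(C_1)$. Writing $C_1=(I_r+P^TP)^{-1/2}\tilde V^*H$ and using $\sigma_r(XY)\ge\sigma_{\min}(X)\,\sigma_r(Y)$ with the invertible $r\times r$ factor $X=(I_r+P^TP)^{-1/2}$ (whose smallest singular value is $(1+\|P\|_2^2)^{-1/2}\ge(1+\|P\|_F^2)^{-1/2}$), it remains to bound $\sigma_r(\tilde V^*H)=\sigma_r\big((Q+Q_\perp P)^*H\big)$ from below. In case (i), Lemma \ref{lesngr} (applied to transposes) gives $\sigma_r\big((Q+Q_\perp P)^*H\big)\ge\sigma_r(Q^*H)-\|P\|_2\|Q_\perp^*H\|_2\ge\sigma_r(Q^*H)-\|P\|_F$, and Corollary \ref{coro:least_sing_bound} --- applied to $Q^T$, which has orthonormal rows and the same row space as the Gaussian $G$, against the orthonormal-column matrix $H$ --- yields $\sigma_r(Q^*H)\ge\tfrac14\sqrt{l/n}$ with probability at least $1-\exp(-n/72)-\exp(-(l-r)/20)$. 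In case (ii) I would first factor $H=U_H\Sigma_HW_H^*$ (compact SVD) and use $\sigma_r(\tilde V^*H)\ge\sigma_r(\tilde V^*U_H)\,\sigma_l(H)$, then repeat the same estimate with the orthonormal-column matrix $U_H$ in place of $H$ (so again $\|Q_\perp^*U_H\|_2\le1$), via Remark \ref{remark:least_sing_bound}/Corollary \ref{coro:least_sing_bound}. A union bound over the three failure events (the $\sigma_r(G)$ tail and the two events internal to Corollary \ref{coro:least_sing_bound}) gives exactly the stated probability $1-\exp(-(l-r)/20)-\exp(-(n-r)/20)-\exp(-n/72)$, and substituting $\sigma_r(C_1)\ge(1+\|P\|_F^2)^{-1/2}\big(\tfrac14\sqrt{l/n}-\|P\|_F\big)$ (resp.\ times $\sigma_l(H)$) into the reduction of the first paragraph finishes the proof.

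The main obstacle is the constant bookkeeping in the last step. One must choose the numerical thresholds in the hypotheses on $\|E\|_F$ (the $48\sqrt{n/l}+6$ in (i), the $12\min(1,\cdot)$ in (ii)) tightly enough that $\|P\|_F$ stays below roughly $0.15\,\sqrt{l/n}$, so that the two losses from the subspace perturbation --- the subtracted $\|P\|_F$ and the factor $(1+\|P\|_F^2)^{-1/2}$ --- still leave $\sigma_r(C_1)\ge\tfrac1{10}\sqrt{l/n}$ (resp.\ $\tfrac1{10}\sigma_l(H)\sqrt{l/n}$), i.e.\ degrade the factor $\sqrt{1+16n/l}$ of Theorem \ref{eqtherr} only to $\sqrt{1+100n/l}$; one also has to keep the three exponential tail terms aligned so that they sum to the clean bound in the statement. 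The reduction $\sigma_r(\tilde V^*H)\ge\sigma_r(\tilde V^*U_H)\sigma_l(H)$ is the device that makes the condition number $\kappa(H)=\|H\|_2\|H^+\|_2$ enter exactly once in case (ii). Everything else is routine linear algebra and the cited tail bounds.
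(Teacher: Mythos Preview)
Your proposal is correct and follows essentially the same route as the paper's proof: reduce to a lower bound on $\sigma_r(V_1^*H)$ via Theorem~\ref{thpert1}, identify the right singular space of $\tilde M=AB$ with the row space of $G$, control $\sigma_r(\tilde M)\ge\sigma_r(A)/3$ via the Gaussian tail bound, invoke Lemma~\ref{lemma:pert_sing_space} and Remark~\ref{remark:pert_sing_space} to write $V_1=(Q+Q_\perp P)(I_r+P^TP)^{-1/2}$ with $\|P\|_2\le\tfrac18\sqrt{l/n}$, and combine with Corollary~\ref{coro:least_sing_bound} to get $\sigma_r(C_1)\ge\tfrac1{10}\sqrt{l/n}$ (and its $\sigma_l(H)$-scaled version in~(ii)). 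The paper carries out the constant bookkeeping you flag as the main obstacle with exactly the target $\|P\|_2\le\tfrac18\sqrt{l/n}$, confirming your estimate; otherwise the arguments coincide step for step.
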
 


%



\begin{proof}

Let  the matrices $B$, $AB$, and $BH$  have full rank 
(see Thm. \ref{thrnd}  and Remark \ref{refllrnk}) and let
\begin{eqnarray*}
M = \begin{bmatrix}U_r & U_{\perp} \end{bmatrix} \begin{bmatrix}\Sigma_r & 0 \\ 0 & \Sigma_\perp \end{bmatrix} \begin{bmatrix}V_r^T \\ V_\perp^T\end{bmatrix} 
\textrm{ and }
\tilde M = \begin{bmatrix}\tilde U_r & \tilde U_{\perp} \end{bmatrix} \begin{bmatrix}\tilde \Sigma_r & 0 \\ 0 & 0 \end{bmatrix} \begin{bmatrix}\tilde V_r^T \\ \tilde V_\perp^T\end{bmatrix} 
\end{eqnarray*}
be SVDs, where
$V_r$ and $\tilde V_r$ are the matrices of the $r$ top right singular vectors of $M$ and $\tilde M$, respectively.
Define $C_1 = V_r^TH$ and $\tilde C_1 := \tilde V_r^TH$ as in (\ref{eqc12}). 
Now Thm. \ref{thpert1} 
implies that
\begin{eqnarray*}
||| M - XY ||| /\tilde \sigma_{r+1}(M) \le \sqrt{1 + ||V_\perp^THC_1^+ ||_2^2}\label{ineq:fact_gauss_pert}.
\end{eqnarray*}

 Next we prove {\bf claim (i)}. Since $||V_\perp||_2 = ||H||_2 = 1$,  we only need to  deduce that $\sigma_r(C_1) > \frac{1}{10}\sqrt{l/n}$ {\em whp}.
  
Recall that the columns of $V_r$ span the row space of an $r\times n$ Gaussian  
matrix $G$ and deduce from 
 Cor.  \ref{coro:least_sing_bound}  that $\sigma_r(\tilde C_1) > \frac{1}{4}\sqrt{l/n}$ with a probability no less than $1 - \exp(-\frac{l-r}{20}) - \exp(-n/72)$.
It remains to verify that {\em whp} the perturbation $E$ only slightly alters the leading right  singular space of $\tilde M$  and that $\sigma_r(C_1)$ is close to $\sigma_r(\tilde C_1)$.
 
  If the norm of the perturbation matrix $||E||_F$ is sufficiently small, then
  by virtue of Lemma \ref{lemma:pert_sing_space} there exists a matrix $P$ such that the matrices
   $\tilde V_r + \tilde V_\perp P$ and $V_r$ have the same column space and that furthermore $||P||_F \le \frac{2||E||_F}{\sigma_r(\tilde M) - 2||E||_F }$. This implies a desired bound on the differences of the smallest positive singular values of $C_1$ and $\tilde C_1$; next we  supply the details.
 
Claim (ii) of Thm. \ref{thsiguna} 
implies that  {\em whp} the $r$-th top singular value of $\tilde M = AB$ is not much less than the $r$-th top singular value of $A$.
Readily  deduce
from Cor. \ref{coro:least_sing_bound} that
\begin{eqnarray*} 
\prob \{ \sigma_r(\tilde M) < \sigma_r(A)/3 \} \le \prob \{\sigma_r(B) = \frac{1}{\sqrt{n}} \sigma_r(G) < 1/3\} \le e^{-(n-r)/20}.
\end{eqnarray*}
Hence $||E||_F \le \frac{\sigma_r(A)}{48\sqrt{n/l} + 6}\le \frac{\sigma_r (\tilde M) }{16\sqrt{n/l} + 2}$ with a probability no less than $1 - e^{-(n-r)/20}$, and so  $||P||_2 \le \frac{1}{8}\sqrt{l/n}$ for some  matrix $P$ of Lemma \ref{lemma:pert_sing_space} such that  $||P||_2 \le \frac{1}{8}\sqrt{l/n}$.  

Now let this holds, let  $\sigma_r(\tilde C_1) > \frac{1}{4}\sqrt{l/n}$, and deduce that
\begin{align}
\sigma_r(V_r^TH) &= \sigma_r\big((I_r + P^TP)^{-1/2}(\tilde V_r^T + P^T\tilde V_\perp^T)H\big) \label{eq:pf_of_thm} \\ 
&\ge \sigma_r\big((I_r + P^TP)^{-1/2}\big) \sigma_r(\tilde V_r^TH + P^T\tilde V_\perp^TH)\\
&\ge \frac{1}{\sqrt{1 + (\sigma_1(P))^2}} \big(  \sigma_r(\tilde C_1) - \sigma_1(P)\big) > \frac{1}{10} \sqrt{l/n}. \label{ineq:pf_of_thm2}
\end{align}
\noindent Equality (\ref{eq:pf_of_thm}) holds because the matrix $\tilde V_r + \tilde V_\perp P$ is normalized by $(I_r + P^TP)^{-1/2}$ (see Remark \ref{remark:pert_sing_space}) and has the same column span as $V_r$.
By applying the union bound deduce that inequality (\ref{ineq:pf_of_thm2}) holds with a probability no less than $1 - \exp(-\frac{l-r}{20}) - \exp(-\frac{n-r}{20}) - \exp(-\frac{n}{72})$.

To prove {\bf claim (ii)}, we essentially need to show that $\sigma_r(C_1) = \sigma_r(V_r^TH) \ge \frac{1}{10}\sqrt{l/n}\cdot \sigma_l(H)$, and then the claim will follow readily from inequality (\ref{ineq:fact_gauss_pert}).  
Let $H = U_H\Sigma_HV_H^T$ be compact SVD
such that $U_H\in \mathbb R^{n\times l}$ , $\Sigma_H\in \mathbb R^{l\times l}$, and $V_H\in \mathbb R^{l\times l}$, and obtain that $\sigma_r(\tilde C_1) \ge \sigma_r(\tilde V_r^TU_H)\sigma_l(\Sigma_H)$ and 
\begin{eqnarray*}
\prob \{ \sigma_r(\tilde C_1) < \frac{1}{4}\sqrt{l/n}\cdot\sigma_l(H)\} < \exp(-\frac{l-r}{20}) + \exp(-\frac{n}{72}).
\end{eqnarray*}
Next  bound $\sigma_r(C_1)$ by showing that  the column spaces of $V_r$ and $\tilde V_r$ are sufficiently close 
to one another
if the perturbation $V_r-\tilde V_r$ is sufficiently small. 
Assume that $||E||_F \le \frac{\sigma_r(A)}{12}$, and  then the assumptions of Lemma \ref{lemma:pert_sing_space} hold whp.
By applying the same argument as in the proof of claim (i), deduce that 
\begin{eqnarray*}
||E||_F \le \min\Big( \frac{\sigma_r (\tilde M)}{4}, \frac{\sigma_r (\tilde M) }{16\sqrt{n/l}\cdot\sigma_l(H) + 2}\Big) 
\end{eqnarray*}
with a probability no less than $1 - e^{-(n-r)/20}$.
It follows that  $||P||_2 \le \frac{1}{8}\sqrt{l/n}\cdot\sigma_l(H)$ for some matrix $P$ of Lemma \ref{lemma:pert_sing_space}. Hence $\sigma_r(C_1)\ge \frac{1}{10}\sqrt{l/n}\cdot \sigma_l(H)$ whp.
\end{proof}

 
\section{Numerical tests}\label{srndsmpl}

 
  In this section we cover our tests  
 of dual sublinear cost 
  variants of Alg. \ref{alg1}.
  The  standard normal distribution function randn of MATLAB
has been applied  to generate Gaussian matrices. 
 The MATLAB function "svd()"  has been
 applied  to calculate
 the $\epsilon$-rank  
for  $\epsilon=10^{-6}$. 
The tests for Tables \ref{SuperfastTable}--\ref{tab8.10}  have been performed
on a 64-bit Windows machine with an Intel i5 dual-core 1.70 GHz processor by using custom programmed software in $C^{++}$ and compiled with LAPACK version 3.6.0 libraries.

  
\subsection{Input matrices for {\em LRA} tests}\label{ststmtrcs}

   
 We generated the following classes of 
 input matrices $M$ for testing {\em LRA} algorithms.
\medskip

{\bf Class I:} $M=U_M\Sigma_M V_M^*$,
where $U_M$ and $V_M$ are the Q factors
of the thin QR orthogonalization of  
$n\times n$ Gaussian matrices, 
$\Sigma_M=\diag(\sigma_j)_{j=1}^n$; 
 $\sigma_j=1/j,~j=1,\dots,r$,
$\sigma_j=10^{-10},~j=r+1,\dots,n$
(cf. [H02, Sec. 28.3]), 
 and  $n=256,
512,
1024$.
(Hence $||M||_2=1$ and 
$||M^+||_2=10^{10}$.)    

\medskip

{\bf Class II:}   
(i) The matrices $M$ of   the discretized single-layer Laplacian operator of  \cite[Sec. 7.1]{HMT11}:
$[S\sigma](x) = c\int_{\Gamma_1}\log{|x-y|}\sigma(y)dy,x\in\Gamma_2$,
for two circles $\Gamma_1 = C(0,1)$ and $\Gamma_2 = C(0,2)$  on the complex plane.
We arrived at the matrices      $M=(m_{ij})_{i,j=1}^n$,  
$m_{i,j} = c\int_{\Gamma_{1,j}}\log|2\omega^i-y|dy$ 
for a constant $c$,  $||M||=1$ and
 the arc $\Gamma_{1,j}$  of  $\Gamma_1$ defined by
the angles in the range $[\frac{2j\pi}{n},\frac{2(j+1)\pi}{n}]$.

(ii) The matrices  that approximate the inverse of a large sparse
matrix obtained from a finite-difference operator
of  \cite[Sec. 7.2]{HMT11}.

\medskip

{\bf Class III:} The dense  matrices of five classes with smaller ratios of ``numerical rank/$n$"  from the built-in test problems in Regularization 
Tools, which came from discretization (based on Galerkin or quadrature methods) of the Fredholm  Integral Equations of the first kind:\footnote{See 
 http://www.math.sjsu.edu/singular/matrices and 
  http://www2.imm.dtu.dk/$\sim$pch/Regutools 
  
For more details see Chapter 4 of the Regularization Tools Manual at \\
  http://www.imm.dtu.dk/$\sim$pcha/Regutools/RTv4manual.pdf }

\medskip

{\em baart:}       Fredholm Integral Equation of the first kind,

{\em shaw:}        one-dimensional image restoration model,
 
{\em gravity:}     1-D gravity surveying model problem,
 




wing:        problem with a discontinuous
 solution,

{\em foxgood:}     severely ill-posed problem.

We used $1024\times 1024$ SVD-generated input matrices of class I having numerical rank $r = 32$,  $400 \times 400$ Laplacian input matrices of class II(i)
having numerical rank $r = 36$,
 $408 \times 800$ matrices having numerical rank $r = 145$  and
representing finite-difference inputs  of class II(ii),
and $1000 \times 1000$ matrices of class III (from the San Jose University database), having numerical rank 4, 6, 10, 12, and 25
for the matrices of the classes {\em wing, baart, foxgood, shaw}, and {\em gravity}, respectively.




\subsection{Five families of Ultrasparse sketch matrices $H$}\label{s17m} 

We generated our  $n\times (r+p)$  sketch matrices $H$ for random $p=1,2, \dots, 21$ by using
{\em  3-ASPH,
  3-APH (see Appendix \ref{spreprmlt}), and
 Random permutation matrices.}
  When  the overestimation parameter $p$ was considerable,
we actually computed {\em LRA} of numerical  rank larger than $r$, and so {\em LRA} was frequently  
closer to an input matrix than 
the optimal 
rank-$r$ approximation. Accordingly,
 the output error norms in our tests ranged from about $10^{-4}$ to $10^{4}$ {\em  relative to the optimal errors}.

We  obtained every 3-APH and every 3-ASPH
matrix by applying three Hadamard's recursive steps
(\ref{eqrfd}) followed by random column permutation defined by random permutation of the integers from 1 to $n$  inclusive. While generating a 3-ASPH matrix we also applied random scaling  with a diagonal matrix $D=\diag(d_i)_{i=1}^n$ where we have 
chosen the values of
independent identically distributed  {\em (iid)}  random variables $d_i$ sampled under the uniform
probability distribution  from the set
$\{-4, -3, -2, -1, 0, 
1 ,2, 3, 4\}$.
 
We used the following families of sketch matrices $H$:
(0)	Gaussian (for control),
(1)	sum of a 3-ASPH  and a permutation matrix,
(2) sum of a 3-ASPH  and two permutation matrices,
(3)	sum of a 3-ASPH  and three permutation matrices,
(4)	sum of a 3-APH  and three  permutation matrices, and
(5)	sum of a 3-APH  and two permutation matrices.

  
\subsection{Test results}\label{ststrslts}

 Tables
 \ref{SuperfastTable}--\ref{tab8.13} 
 display the average relative error norm  $ \frac{\|M - \tilde M \|_2}{\|M - M_{nrank}\|_2}$ in  our tests repeated 100 times for each class of input matrices and
 each 
size of an input matrix and sketch matrix $H$ for Alg.  \ref{alg1} or    
 for each size of an input matrix and a pair of left-hand and right-hand sketch matrices $F$ and $H$  for Alg.  \ref{alg01}.
 

In all our tests we applied the sketch matrices of the six  families of the previous subsection. 

   
 Tables  \ref{SuperfastTable}--\ref{tab8.10} display the average relative error norm for the output of
 Alg. \ref{alg1}; in  our tests 
 it ranged from about $10^{-3}$ to $10^{1}$.
 The numbers in parentheses in the first line of Tables \ref{tab8.9}
 and  \ref{tab8.10} show the numerical rank of input matrices.

 Tables  \ref{tab8.11}--\ref{tab8.13} display the average relative error norm for the output of
 Alg. \ref{alg01} 
applied  to the same input matrices from classes I--III as in our experiments for Alg. \ref{alg1}.

In these tests we used 
   $n\times \ell$ and $\ell\times m$ sketch matrices for $\ell = r+p$ and $k=c\ell$ for $c=1,2,3$ and random $p=1,2, \dots, 21$.

\begin{table}[htb]
\begin{center} 
\begin{tabular}{|c|c|c|c|c|c|c|}
\hline
			& \multicolumn{2}{|c|}{SVD-generated Matrices} & \multicolumn{2}{|c|}{Laplacian Matrices} & \multicolumn{2}{|c|}{Finite Difference Matrices}\\\hline
 \text{Family No.} & \text{Mean} & \text{Std} & \text{Mean} & \text{Std} & \text{Mean} & \text{Std} \\\hline
Family 0 & 4.52e+01 & 5.94e+01 & 6.81e-01 & 1.23e+00 & 2.23e+00 & 2.87e+00\\\hline
Family 1 & 3.72e+01 & 4.59e+01 & 1.33e+00 & 2.04e+00 & 8.22e+00 & 1.10e+01\\\hline
Family 2 & 5.33e+01 & 6.83e+01 & 1.02e+00 & 2.02e+00 & 4.92e+00 & 4.76e+00\\\hline
Family 3 & 4.82e+01 & 4.36e+01 & 7.56e-01 & 1.47e+00 & 4.82e+00 & 5.73e+00\\\hline
Family 4 & 4.68e+01 & 6.65e+01 & 7.85e-01 & 1.31e+00 & 3.53e+00 & 3.68e+00\\\hline
Family 5 & 5.45e+01 & 6.23e+01 & 1.03e+00 & 1.78e+00 & 2.58e+00 & 3.73e+00\\\hline
\end{tabular}
\caption{Relative error norms in tests for matrices of classes I and II}
 \label{SuperfastTable} 
\end{center}
\end{table}



\begin{table}
\begin{center}
\begin{tabular}{|c|c|c|c|c|c|c|}
\hline
			& \multicolumn{2}{|c|}{wing (4)} & \multicolumn{2}{|c|}{baart (6)} 
\\\hline
  
 \text{Family No.} & \text{Mean} & \text{Std} & \text{Mean}  & \text{Std}   \\\hline
 Family 0 	& 1.07e-03 & 6.58e-03 & 2.17e-02 & 1.61e-01 
\\\hline
 Family 1 	& 3.54e-03 & 1.39e-02  & 1.37e-02 & 6.97e-02 
\\\hline
 Family 2 	& 4.74e-03 & 2.66e-02  & 1.99e-02 & 8.47e-02  
\\\hline
 Family 3 	& 1.07e-03 & 5.69e-03  & 1.85e-02 & 8.74e-02  
\\\hline
 Family 4 	& 4.29e-03 & 1.78e-02  & 8.58e-03 & 5.61e-02  
\\\hline
 Family 5 	& 1.71e-03 & 1.23e-02  & 3.66e-03 & 2.38e-02 
\\\hline
\end{tabular}
\caption{Relative error norms for  input matrices of class III (of San Jose University database)}\label{tab8.9}
\end{center}
\end{table}

\begin{table}[htb] 
\begin{center}
\begin{tabular}{|c|c|c|c|c|c|c|}
\hline
			& \multicolumn{2}{|c|}{foxgood (10)} & \multicolumn{2}{|c|}{shaw (12)} & \multicolumn{2}{|c|}{gravity (25)}\\\hline
 
 \text{Family No.} & \text{Mean} & \text{Std} & \text{Mean}  & \text{Std} & \text{Mean} 	& \text{Std} \\\hline
 Family 0 	& 1.78e-01 & 4.43e-01  & 4.07e-02 & 1.84e-01  & 5.26e-01 & 1.24e+00\\\hline
 Family 1 	& 1.63e+00 & 3.43e+00  & 8.68e-02 & 3.95e-01  & 3.00e-01 & 7.64e-01\\\hline
 Family 2 	& 1.97e+00 & 4.15e+00  & 7.91e-02 & 4.24e-01  & 1.90e-01 & 5.25e-01\\\hline
 Family 3 	& 1.10e+00 & 2.25e+00  & 4.50e-02 & 2.21e-01  & 3.63e-01 & 1.15e+00\\\hline
 Family 4 	& 1.23e+00 & 2.11e+00  & 1.21e-01 & 5.44e-01  & 2.36e-01 & 5.65e-01\\\hline
 Family 5 	& 1.08e+00 & 2.32e+00  & 1.31e-01 & 5.42e-01  & 2.66e-01 & 8.22e-01\\\hline
\end{tabular}
\caption{Relative error norms for input matrices of class III (of San Jose University database)} \label{tab8.10}
\end{center}
\end{table}


\begin{table}[htb]
\begin{center}
\begin{tabular}{|c|c|c|c|c|c|c|c|}
\hline
		&	& \multicolumn{2}{|c|}{SVD-generated Matrices} & \multicolumn{2}{|c|}{Laplacian Matrices} & \multicolumn{2}{|c|}{Finite Difference Matrices}\\\hline	
 
\text{$k$} & \text{Class No.} & \text{Mean} & \text{Std} & \text{Mean}  & \text{Std} & \text{Mean} 	& \text{Std} \\\hline

\multirow{6}*{$\ell$} 
& Family 0 & 2.43e+03 & 1.19e+04 & 1.28e+01 & 2.75e+01 & 9.67e+01 & 1.48e+02 \\ \cline{2-8}
& Family 1 & 1.45e+04 & 9.00e+04 & 8.52e+03 & 8.48e+04 & 7.26e+03 & 2.47e+04 \\ \cline{2-8}
& Family 2 & 4.66e+03 & 2.33e+04 & 3.08e+01 & 4.07e+01 & 3.80e+02 & 1.16e+03 \\ \cline{2-8}
& Family 3 & 2.82e+03 & 9.47e+03 & 2.42e+01 & 3.21e+01 & 1.90e+02 & 3.90e+02 \\ \cline{2-8}
& Family 4 & 3.15e+03 & 7.34e+03 & 2.71e+01 & 4.69e+01 & 1.83e+02 & 2.92e+02 \\ \cline{2-8}
& Family 5 & 2.40e+03 & 6.76e+03 & 2.01e+01 & 3.56e+01 & 2.31e+02 & 5.33e+02  \\\hline

\multirow{6}*{$2\ell$}
& Family 0 & 5.87e+01 & 5.59e+01 & 7.51e-01 & 1.33e+00 & 3.17e+00 & 3.89e+00 \\ \cline{2-8}
& Family 1 & 7.91e+01 & 9.86e+01 & 3.57e+00 & 7.07e+00 & 1.55e+01 & 2.39e+01 \\ \cline{2-8}
& Family 2 & 5.63e+01 & 3.93e+01 & 3.14e+00 & 4.50e+00 & 5.25e+00 & 5.93e+00 \\ \cline{2-8}
& Family 3 & 7.58e+01 & 8.58e+01 & 2.84e+00 & 3.95e+00 & 4.91e+00 & 6.03e+00 \\ \cline{2-8}
& Family 4 & 6.24e+01 & 4.54e+01 & 1.99e+00 & 2.93e+00 & 3.64e+00 & 4.49e+00 \\ \cline{2-8}
& Family 5 & 6.41e+01 & 6.12e+01 & 2.65e+00 & 3.13e+00 & 3.72e+00 & 4.54e+00  \\\hline

\multirow{6}*{$3\ell$}
& Family 0 & 9.29e+01 & 3.29e+02 & 8.33e-01 & 1.54e+00 & --- & --- \\ \cline{2-8}
& Family 1 & 5.58e+01 & 4.20e+01 & 3.09e+00 & 4.08e+00 & --- & --- \\ \cline{2-8}
& Family 2 & 5.11e+01 & 4.94e+01 & 1.70e+00 & 2.08e+00 & --- & --- \\ \cline{2-8}
& Family 3 & 6.70e+01 & 8.27e+01 & 2.35e+00 & 2.96e+00 & --- & --- \\ \cline{2-8}
& Family 4 & 5.36e+01 & 5.74e+01 & 2.14e+00 & 3.76e+00 & --- & --- \\ \cline{2-8}
& Family 5 & 4.79e+01 & 4.58e+01 & 1.81e+00 & 2.94e+00 & --- & --- \\\hline

\end{tabular}
\caption{Relative error norms in tests for matrices of classes I and II }
\label{tab8.11}
\end{center}
\end{table}

\begin{table}[htb] 
\begin{center}

\begin{tabular}{|c|c|c|c|c|c|}
\hline
		&	& \multicolumn{2}{|c|}{wing (4)} & \multicolumn{2}{|c|}{baart (6)} \\\hline
 
\text{$k$} & \text{Class No.} & \text{Mean} & \text{Std} & \text{Mean}  & \text{Std}  \\\hline
\multirow{6}*{$\ell$}
& Family 0 & 1.70e-03 & 9.77e-03 & 4.55e+00 & 4.47e+01  \\ \cline{2-6}
& Family 1 & 3.58e+02 & 3.58e+03 & 1.42e-01 & 9.20e-01  \\ \cline{2-6}
& Family 2 & 2.16e-01 & 2.10e+00 & 1.10e-02 & 6.03e-02  \\ \cline{2-6}
& Family 3 & 7.98e-04 & 7.22e-03 & 4.14e-03 & 3.41e-02  \\ \cline{2-6}
& Family 4 & 5.29e-03 & 3.57e-02 & 2.22e+01 & 2.21e+02  \\ \cline{2-6}
& Family 5 & 6.11e-02 & 5.65e-01 & 3.33e-02 & 1.30e-01  \\\hline

\multirow{6}*{$2\ell$}
& Family 0 & 7.49e-04 & 5.09e-03 & 5.34e-02 & 2.19e-01  \\ \cline{2-6}
& Family 1 & 4.74e-03 & 2.32e-02 & 2.14e-02 & 1.33e-01  \\ \cline{2-6}
& Family 2 & 3.01e-02 & 2.34e-01 & 1.26e-01 & 7.86e-01  \\ \cline{2-6}
& Family 3 & 2.25e-03 & 1.38e-02 & 5.91e-03 & 2.63e-02  \\ \cline{2-6}
& Family 4 & 3.94e-03 & 2.54e-02 & 1.57e-02 & 6.71e-02  \\ \cline{2-6}
& Family 5 & 2.95e-03 & 1.47e-02 & 1.58e-02 & 1.20e-01  \\\hline

\multirow{6}*{$3\ell$}
& Family 0 & 4.59e-03 & 2.35e-02 & 1.50e-02 & 7.09e-02  \\ \cline{2-6}
& Family 1 & 5.96e-03 & 2.82e-02 & 7.57e-03 & 4.84e-02  \\ \cline{2-6}
& Family 2 & 1.74e-02 & 1.06e-01 & 6.69e-03 & 2.97e-02  \\ \cline{2-6}
& Family 3 & 3.07e-03 & 3.07e-02 & 1.16e-02 & 5.16e-02  \\ \cline{2-6}
& Family 4 & 2.57e-03 & 1.47e-02 & 2.35e-02 & 9.70e-02  \\ \cline{2-6}
& Family 5 & 4.32e-03 & 2.70e-02 & 1.36e-02 & 5.73e-02  \\\hline


\end{tabular}
\caption{Relative error norms for  input matrices of class III (of San Jose University database) }
\label{tab8.12}
\end{center}

\end{table}

\begin{table}[htb]
\begin{center}
\begin{tabular}{|c|c|c|c|c|c|c|c|}
\hline
& & \multicolumn{2}{|c|}{foxgood (10)} & \multicolumn{2}{|c|}{shaw (12)} & \multicolumn{2}{|c|}{gravity (25)}\\\hline
 
\text{$k$} & \text{Class No.} & \text{Mean} & \text{Std} & \text{Mean}  & \text{Std} & \text{Mean} 	& \text{Std} \\\hline
\multirow{6}*{$\ell$}
& Family 0 & 5.46e+00 & 1.95e+01 & 8.20e-01 & 4.83e+00 & 8.56e+00 & 3.33e+01 \\ \cline{2-8}
& Family 1 & 8.51e+03 & 1.88e+04 & 1.12e+00 & 5.75e+00 & 1.97e+01 & 1.00e+02 \\ \cline{2-8}
& Family 2 & 5.35e+03 & 1.58e+04 & 1.93e-01 & 1.51e+00 & 8.79e+00 & 4.96e+01 \\ \cline{2-8}
& Family 3 & 6.14e+03 & 1.74e+04 & 4.00e-01 & 1.90e+00 & 7.07e+00 & 2.45e+01 \\ \cline{2-8}
& Family 4 & 1.15e+04 & 2.33e+04 & 2.95e-01 & 1.71e+00 & 4.31e+01 & 3.80e+02 \\ \cline{2-8}
& Family 5 & 7.11e+03 & 1.87e+04 & 2.18e-01 & 9.61e-01 & 6.34e+00 & 2.59e+01\\\hline

\multirow{6}*{$2\ell$}
& Family 0 & 2.70e-01 & 7.03e-01 & 5.54e-02 & 2.62e-01 & 5.34e-01 & 1.59e+00 \\ \cline{2-8}
& Family 1 & 5.24e+02 & 5.19e+03 & 4.67e-02 & 2.35e-01 & 1.38e+01 & 1.31e+02 \\ \cline{2-8}
& Family 2 & 2.45e+00 & 3.47e+00  & 8.31e-02 & 6.37e-01 & 5.47e-01 & 1.69e+00 \\ \cline{2-8}
& Family 3 & 2.43e+00 & 3.74e+00  & 1.24e-01 & 8.52e-01 & 5.10e-01 & 1.24e+00 \\ \cline{2-8}
& Family 4 & 2.17e+00 & 2.92e+00  & 1.76e-01 & 8.76e-01 & 2.60e-01 & 7.38e-01 \\ \cline{2-8}
& Family 5 & 2.10e+00 & 3.34e+00 & 1.26e-01 & 5.99e-01 & 5.68e-01 & 1.46e+00 \\\hline

\multirow{6}*{$3\ell$}
& Family 0 & 2.62e-01 & 8.16e-01 & 4.49e-02 & 1.64e-01 & 4.59e-01 & 1.38e+00 \\ \cline{2-8}
& Family 1 & 2.72e+00 & 4.60e+00 & 6.84e-02 & 3.43e-01 & 3.44e-01 & 8.60e-01 \\ \cline{2-8}
& Family 2 & 2.42e+00 & 3.92e+00 & 8.26e-02 & 5.38e-01 & 6.89e-01 & 2.15e+00 \\ \cline{2-8}
& Family 3 & 3.22e+02 & 3.20e+03 & 6.06e-02 & 2.95e-01 & 5.26e-01 & 1.17e+00 \\ \cline{2-8}
& Family 4 & 1.91e+00 & 3.36e+00 & 6.61e-02 & 3.36e-01 & 6.19e-01 & 1.54e+00 \\ \cline{2-8}
& Family 5 & 2.73e+00 & 6.90e+00 & 5.72e-02 & 2.39e-01 & 7.22e-01 & 1.59e+00 \\\hline

\end{tabular}
\caption{Relative error norms for  input matrices of class III (of San Jose University database)}
\label{tab8.13}
\end{center}

\end{table}
\clearpage

 

\bigskip


 
{\bf \Large Appendix} 
\appendix


\section{The spectral norms of a Gaussian  matrix and its pseudo inverse}\label{snrmg}

Hereafter
$\Gamma(x)=
\int_0^{\infty}\exp(-t)t^{x-1}dt$
denotes the Gamma function; 
 $\nu_{p, q}$ and $\nu^+_{p, q}$ denote the random variables 
representing the spectral norms of a $p\times q$ Gaussian random matrix and its Moore-Penrose pseudo inverse, respectively.


\begin{theorem}\label{thsignorm} {\rm [Spectral norms of a Gaussian matrix. 
 See  \cite[Thm. II.7]{DS01}.]}

  {\rm Probability}$\{\nu_{m,n}>t+\sqrt m+\sqrt n\}\le
\exp(-t^2/2)$ for $t\ge 0$, 
 $\mathbb E(\nu_{m,n})\le \sqrt m+\sqrt n$.
\end{theorem}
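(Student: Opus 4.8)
This is a classical result (Davidson--Szarek), and the plan is to prove it in two independent parts: an expectation bound obtained from a Gaussian comparison inequality, and a subgaussian tail obtained from Gaussian concentration of measure. Throughout write $G$ for the $m\times n$ Gaussian matrix, so that $\nu_{m,n}=\|G\|_2=\sup\{\,y^TGx:\ x\in S^{n-1},\ y\in S^{m-1}\,\}$, the supremum of the centered Gaussian process $X_{x,y}:=y^TGx$ over the product of the unit spheres $S^{n-1}\subset\mathbb R^n$ and $S^{m-1}\subset\mathbb R^m$.

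\textbf{Expectation bound.} First I would dominate $X$ by the decoupled Gaussian process $Y_{x,y}:=g^Tx+h^Ty$, where $g\in\mathbb R^{n}$ and $h\in\mathbb R^{m}$ are independent standard Gaussian vectors. A direct computation on the unit spheres, writing $a=x^Tx'$ and $b=y^Ty'$, gives $\mathbb E[(X_{x,y}-X_{x',y'})^2]=\|yx^T-y'x'^T\|_F^2=2-2ab$ and $\mathbb E[(Y_{x,y}-Y_{x',y'})^2]=\|x-x'\|_2^2+\|y-y'\|_2^2=4-2a-2b$, so the increment-domination hypothesis $\mathbb E[(X_{x,y}-X_{x',y'})^2]\le\mathbb E[(Y_{x,y}-Y_{x',y'})^2]$ reduces to $(1-a)(1-b)\ge 0$, which holds because $a,b\in[-1,1]$. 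Then the Sudakov--Fernique (one-sided Slepian) comparison inequality --- which needs only domination of increment variances, not equality of variances --- yields
\[
\mathbb E(\nu_{m,n})=\mathbb E\Big[\sup_{x,y}X_{x,y}\Big]\le\mathbb E\Big[\sup_{x,y}Y_{x,y}\Big]=\mathbb E\|g\|_2+\mathbb E\|h\|_2\le\sqrt n+\sqrt m,
\]
where the last inequality is Jensen's, since $\mathbb E\|g\|_2\le(\mathbb E\|g\|_2^2)^{1/2}=\sqrt n$ and likewise for $h$.

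\textbf{Tail bound.} Next I would observe that the map $\mathbb R^{m\times n}\to\mathbb R$, $A\mapsto\|A\|_2$, is $1$-Lipschitz with respect to the Frobenius (Euclidean) norm, because $\bigl|\,\|A\|_2-\|A'\|_2\,\bigr|\le\|A-A'\|_2\le\|A-A'\|_F$. Identifying the entries of $G$ with a standard Gaussian vector in $\mathbb R^{mn}$ and invoking the Gaussian concentration inequality (a consequence of Gaussian isoperimetry: for any $1$-Lipschitz $f$ and standard Gaussian $Z$ one has $\prob\{f(Z)\ge\mathbb E f(Z)+t\}\le\exp(-t^2/2)$ for $t\ge 0$), I get $\prob\{\nu_{m,n}\ge\mathbb E(\nu_{m,n})+t\}\le\exp(-t^2/2)$. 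Combining this with the already established $\mathbb E(\nu_{m,n})\le\sqrt m+\sqrt n$ and the monotonicity of the tail probability in the threshold gives $\prob\{\nu_{m,n}>t+\sqrt m+\sqrt n\}\le\exp(-t^2/2)$.

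\textbf{Main obstacle.} The only step that is not routine is the Gaussian comparison: one must pick the auxiliary process $Y$ correctly, verify the elementary increment inequality $(1-a)(1-b)\ge 0$, and then apply the Sudakov--Fernique version of Slepian's lemma rather than the classical one (the latter would require equal variances, which fail here). The Lipschitz estimate and the concentration step are entirely standard.
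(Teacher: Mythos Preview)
Your proof is correct and is exactly the classical Davidson--Szarek argument. The paper does not give its own proof of this theorem at all: it simply records the statement and cites \cite[Theorem II.7]{DS01}, so your write-up supplies what the paper omits rather than diverging from it.
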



\begin{theorem}\label{thsiguna} 
 {\rm [Spectral norms of the pseudo inverse of a Gaussian matrix.]} 

(i)  {\rm Probability} $\{\nu_{m,n}^+\ge m/x^2\}<\frac{x^{m-n+1}}{\Gamma(m-n+2)}$
for $m\ge n\ge 2$ and all positive $x$,

(ii)  {\rm Probability} $\{\nu_{m,n}^+\ge  	 
  t\frac{e\sqrt{m}}{m-n+1}\}\le t^{n-m}$
  for all $t\ge 1$ provided that $m\ge 4$, 
  
(iii) $\mathbb E(\nu^+_{m,n})\le \frac{e\sqrt{m}}{m-n}$ 
provided that $m\ge n+2\ge 4$,
 
\end{theorem}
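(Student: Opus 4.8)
The random variable $\nu^+_{m,n}$ equals $1/\sigma_n(G)$ for an $m\times n$ Gaussian matrix $G$ (with $m\ge n$), so all three assertions are statements about the lower tail and the first negative moment of the least singular value of a tall Gaussian matrix; since nothing earlier in the paper bears on $\|G^+\|$, I would establish them essentially from scratch (this is in substance the Edelman / Chen--Dongarra estimate). The plan is to first prove one sharp tail bound and then read off (i)--(iii). The target bound is
$$\Pr\{\sigma_n(G)\le\epsilon\}\le\frac{(\epsilon\sqrt m)^{\,m-n+1}}{\Gamma(m-n+2)}\qquad(\epsilon>0,\ m\ge n),$$
of which claim (i) is just a rewriting in terms of $\nu^+_{m,n}=1/\sigma_n(G)$. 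To prove it I would pass to the real Wishart matrix $W=G^TG$, whose ordered eigenvalues are $\sigma_1(G)^2\ge\cdots\ge\sigma_n(G)^2>0$ and whose joint eigenvalue density is proportional to $\prod_i\lambda_i^{(m-n-1)/2}e^{-\lambda_i/2}\prod_{i<j}(\lambda_i-\lambda_j)$; integrating out $\lambda_1,\dots,\lambda_{n-1}$ and bounding $\prod_{i<n}(\lambda_i-\lambda_n)$ by $\prod_{i<n}\lambda_i$ near $\lambda_n=0$ gives a marginal density of $\lambda_n=\sigma_n(G)^2$ of the form $O\bigl(\lambda_n^{(m-n-1)/2}\bigr)$ with a Selberg-integral constant, and integrating this monomial from $0$ to $\epsilon^2$ (the bound is trivial unless $\epsilon$ is small) yields the displayed estimate.

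Granting that estimate, claim (ii) is immediate: putting $\epsilon=\dfrac{m-n+1}{t\,e\sqrt m}$, so that $\sigma_n(G)\le\epsilon$ is exactly the event $\nu^+_{m,n}\ge t\dfrac{e\sqrt m}{m-n+1}$, turns the bound into $\dfrac{(m-n+1)^{m-n+1}}{(te)^{m-n+1}\Gamma(m-n+2)}$, and the Stirling lower bound $\Gamma(k+1)=k!\ge(k/e)^k$ with $k=m-n+1$ gives $(m-n+1)^{m-n+1}/\Gamma(m-n+2)\le e^{m-n+1}$, so the bound collapses to $t^{-(m-n+1)}\le t^{n-m}$ for $t\ge1$. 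For claim (iii) I would integrate the tail, $\mathbb E(\nu^+_{m,n})=\int_0^\infty\Pr\{\nu^+_{m,n}>s\}\,ds$, using $\Pr\{\nu^+_{m,n}>s\}\le\min\bigl(1,(\sqrt m/s)^{m-n+1}/\Gamma(m-n+2)\bigr)$ and splitting at the crossover point $s^\ast=\sqrt m\,\Gamma(m-n+2)^{-1/(m-n+1)}$. The flat part contributes $s^\ast$ and the power-law part contributes $s^\ast/(m-n)$ (convergence of $\int_{s^\ast}^\infty s^{-(m-n+1)}\,ds$ is where $m>n$, hence the hypothesis on $m-n$, enters), so $\mathbb E(\nu^+_{m,n})\le s^\ast(m-n+1)/(m-n)$, and Stirling once more ($\Gamma(m-n+2)^{1/(m-n+1)}\ge(m-n+1)/e$) gives $s^\ast\le e\sqrt m/(m-n+1)$ and hence $\mathbb E(\nu^+_{m,n})\le e\sqrt m/(m-n)$.

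The one genuinely substantial step is the tail bound of the first paragraph: getting the exponent exactly $m-n+1$ together with the $\Gamma(m-n+2)$ normalization. The density computation is routine in outline but fussy, since one must control the Vandermonde factor and the exponential weight uniformly and collect the Gamma-function constants produced by the Selberg-type integral over $\lambda_1,\dots,\lambda_{n-1}$; a much shorter covering-number argument over the sphere $S^{n-1}$ combined with the norm bound of Theorem~\ref{thsignorm} reproduces a bound of the same shape but loses the sharp constant and one unit of the exponent, which would still give a slightly weaker version of (i). Once the tail bound is in hand, (ii) and (iii) are short manipulations with Stirling's inequality, and the mild numerical hypotheses ($m\ge n\ge2$ in (i), $m\ge4$ in (ii), $m\ge n+2$ in (iii)) are exactly what makes those manipulations go through.
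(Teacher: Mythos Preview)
The paper does not actually prove this theorem: its entire proof is a list of citations (to \cite{CD05} for (i) and to \cite{HMT11} for (ii) and (iii)). Your sketch is essentially a reconstruction of those cited arguments --- the Wishart-density bound on the smallest singular value is the Chen--Dongarra approach, and your derivations of (ii) and (iii) from that tail bound via Stirling's inequality and tail-integration match the treatment in \cite{HMT11}. So you are not taking a different route so much as supplying the content of the references the paper defers to, and your outline is sound.

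One small caveat: your target bound $\Pr\{\sigma_n(G)\le\epsilon\}\le(\epsilon\sqrt m)^{\,m-n+1}/\Gamma(m-n+2)$ is indeed the correct Chen--Dongarra estimate and is exactly what your Stirling step for (ii) and your tail integral for (iii) require; but reparametrizing it via $\nu^+_{m,n}=1/\sigma_n(G)$ gives $\Pr\{\nu^+_{m,n}\ge\sqrt m/x\}\le x^{m-n+1}/\Gamma(m-n+2)$, with threshold $\sqrt m/x$ rather than the paper's $m/x^2$. Hence your assertion that claim (i) is ``just a rewriting'' of your target bound is not literally accurate as (i) is stated here. This looks like a quirk in the paper's formulation of (i) rather than an error on your part: the paper's threshold $m/x^2$ would give the wrong $\epsilon$-exponent in the tail and would not feed into (ii) through your Stirling manipulation, whereas your version does.
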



\begin{proof}
 See \cite[Proof of Lemma 4.1]{CD05} for claim (i),   
\cite[Prop.
 10.4 and Eqns. (10.3) and (10.4)]{HMT11} for claims (ii) and (iii), 
and \cite[Thm. 3.3]{SST06} for claim (iv).
\end{proof}
 

Thm. \ref{thsiguna}
implies reasonable probabilistic upper bounds on the norm 
 $\nu_{m,n}^+$,
even where the integer $|m-n|$ is close to 0; {\em whp} the upper bounds of Thm. \ref{thsiguna}
on the norm $\nu^+_{m,n}$ decrease very fast as the difference $|m-n|$ grows from 1.

 

\section{Randomized  pre-processing of lower rank matrices}\label{srndoprpr}


Hereafter 
$A  \preceq B$ ($A \succeq B$) means that $A$ is  statistically 
less (greater) or equal to $B$.
The following  theorem 
(cf. \cite[Sec. 8.2]{PLSZa})  shows that
  pre-processing with 
Gaussian sketch matrices $X$ and/or $Y$ transforms any matrix that admits $LRA$
into a perturbation of a factor-Gaussian matrix.

\begin{theorem}\label{thquasi}  
For five integers $k$, $l$, $m$, $n$, and  $r$
satisfying the bounds
$r\le k\le m,~r\le l\le n$,
an $m\times n$ well-conditioned matrix $M$ 
of rank $r$, and a pair of  $k\times m$ and $n\times l$ Gaussian matrices $G$ and $H$, it holds that

(i) $GM$ is a left factor-Gaussian matrix of expected rank $r$ such that
$$||GM||_2\preceq ||M||_2~\nu_{k,r}~{\rm  
and}~||(GM)^+||_2\preceq ||M^+||_2~\nu_{k,r}^+,$$

(ii) $MH$ is a right factor-Gaussian matrix of expected  rank $r$ such that
$$||MH||_2 \preceq ||M||_2~\nu_{r,l}~{\rm 
and}~ 
||(MH)^+||_2\preceq ||M^+||_2~\nu_{r,l}^+,$$ 
(iii) $GMH$ is a two-sided 
factor-Gaussian matrix of expected  rank 
$r$
such that $$||GMH||_2\preceq ||M||_2~\nu_{k,r}\nu_{r,l}~{\rm 
and}~||(GMH)^+||_2\preceq ||M^+||_2~\nu_{k,r}^+\nu_{r,l}^+.$$
\end{theorem}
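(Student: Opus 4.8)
The plan is to push everything through the compact SVD $M=U_M\Sigma_M V_M^*$, where $U_M\in\mathbb R^{m\times r}$ and $V_M\in\mathbb R^{n\times r}$ have orthonormal columns and $\Sigma_M=\diag(\sigma_j(M))_{j=1}^r$. First I would rewrite the three products as $GM=(GU_M)(\Sigma_M V_M^*)$, $MH=(U_M\Sigma_M)(V_M^*H)$, and $GMH=(GU_M)\,\Sigma_M\,(V_M^*H)$. The crucial point is that $GU_M$ and $V_M^*H$ are Gaussian: since $U_M^T$ has orthonormal rows and $G^T$ is an $m\times k$ Gaussian matrix (with $r\le k\le m$), Lemma \ref{lepr3} shows that $U_M^TG^T$, hence $GU_M$, has the distribution of a $k\times r$ Gaussian matrix, and symmetrically $V_M^*H$ has the distribution of an $r\times l$ Gaussian matrix. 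Since $GU_M$ is a function of $G$ alone and $V_M^*H$ a function of $H$ alone, these two Gaussian matrices are independent. By Theorem \ref{thrnd} and Remark \ref{refllrnk} I may also assume throughout that $GU_M$, $V_M^*H$, $GM$, $MH$, and $GMH$ all have full rank $r$.

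With this set-up the structural claims are immediate from Definition \ref{deffctrg}: $\Sigma_M V_M^*$ is an $r\times n$ constant matrix of full rank $r$ whose singular values coincide with those of $\Sigma_M$, so it is well conditioned because $M$ is, whence $GM$ is a left factor-Gaussian matrix of expected rank $r$; likewise $U_M\Sigma_M$ is a well-conditioned $m\times r$ full-rank constant matrix, so $MH$ is a right factor-Gaussian matrix of expected rank $r$; and $GMH=(GU_M)\Sigma_M(V_M^*H)$ exhibits $GMH$ as a two-sided factor-Gaussian matrix of expected rank $r$ with diagonal middle factor $\Sigma_M$. For the norm bounds I would use submultiplicativity of the spectral norm for the ``forward'' inequalities and Lemma \ref{lehg} for the pseudo-inverse ones, together with the identities $\|\Sigma_M V_M^*\|_2=\|U_M\Sigma_M\|_2=\|\Sigma_M\|_2=\|M\|_2$ and $\|(\Sigma_M V_M^*)^+\|_2=\|(U_M\Sigma_M)^+\|_2=\|\Sigma_M^{-1}\|_2=\|M^+\|_2$. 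For instance $\|GM\|_2\le\|GU_M\|_2\,\|M\|_2$ and $\|(GM)^+\|_2\le\|(GU_M)^+\|_2\,\|M^+\|_2$ hold pointwise, and since $GU_M$ is distributed as a $k\times r$ Gaussian matrix this yields $\|GM\|_2\preceq\|M\|_2\,\nu_{k,r}$ and $\|(GM)^+\|_2\preceq\|M^+\|_2\,\nu_{k,r}^+$; part (ii) follows either by the analogous argument with the roles of $U_M$ and $V_M$ exchanged or by transposing and applying (i) to $M^*$; and part (iii) follows by one further application of submultiplicativity and of Lemma \ref{lehg} to the three-factor products $(GU_M)\Sigma_M(V_M^*H)$, using the independence of $GU_M$ and $V_M^*H$.

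I do not foresee a genuine obstacle here. The only delicate points are that the $\preceq$ statements must be read as stochastic domination justified by a pointwise estimate — each left-hand norm is, for every realization of $G$ and $H$, no larger than a quantity whose exact distribution is $\|M\|_2\,\nu_{k,r}$ (respectively $\|M^+\|_2\,\nu_{k,r}^+$, and the corresponding products in (iii)) — and that the pseudo-inverse factorizations need not be written out explicitly, since Lemma \ref{lehg} already supplies the submultiplicative inequality $|||(AB)^+|||\le|||A^+|||\;|||B^+|||$ for full-rank factors, which is all that is used.
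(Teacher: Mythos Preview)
Your argument is correct and complete. The paper itself does not give a proof of Theorem~\ref{thquasi}; it merely states the result with a pointer to \cite[Section~8.2]{PLSZa}, so there is no in-paper proof to compare against. Your route---reduce to the compact SVD $M=U_M\Sigma_M V_M^*$, invoke orthogonal invariance (Lemma~\ref{lepr3}) to identify $GU_M$ and $V_M^*H$ as independent $k\times r$ and $r\times l$ Gaussian matrices, read off the factor-Gaussian structure from Definition~\ref{deffctrg}, and then combine spectral-norm submultiplicativity with Lemma~\ref{lehg} for the pseudo-inverse bounds---is the natural one and is almost certainly what the cited reference does as well. The only point worth making explicit in a write-up is that the stochastic domination in part~(iii) uses the independence of $GU_M$ and $V_M^*H$ so that the bounding product $\|(GU_M)\|_2\,\|\Sigma_M\|_2\,\|V_M^*H\|_2$ (and its pseudo-inverse analogue) genuinely has the distribution $\|M\|_2\,\nu_{k,r}\nu_{r,l}$ (respectively $\|M^+\|_2\,\nu_{k,r}^+\nu_{r,l}^+$); you mention this, so nothing is missing.
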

\begin{remark}\label{reprprp}
Based on this theorem we can readily extend our results on $LRA$ of perturbed 
factor-Gaussian matrices to all matrices that admit $LRA$ and are pre-processed with Gaussian sketch matrices.
We cannot perform such pre-processing
at sublinear cost,
 but empirically  pre-processing at sublinear cost with various Ultrasparse  sketch matrices having orthonormal columns tends to work as efficiently \cite{PLSZ16,
PLSZ17}.
\end{remark}

  
\section{The error bounds for  sketching algorithms}\label{slmmalg1}
 
In the next theorem we write
$\sigma_{F,r+1}^2(M):=\sum_{j>r}\sigma_j^2(M)$.

 \begin{theorem}\label{therr51}
(i) Let $2\le r\le l-2$ and apply Alg.  \ref{alg1} with a Gaussian sketch matrix $H$. Then 
(cf. \cite[Thms. 10.5 and 10.6]{HMT11}) \footnote{\cite[Thms. 10.7 and 10.8]{HMT11} 
estimate the norms of $M-XY$
in probability.} 
$$\mathbb E||M-XY||_F^2
 \le \Big(1+\frac{r}{l-r-1}\Big)~\sigma_{F,r+1}^2(M),$$
$$\mathbb E||M-XY||_2\le \Big(1+\sqrt{\frac{r}{l-r-1}}~\Big)~
\sigma_{r+1}(M)+
 \frac{e\sqrt l}{l-r} \sigma_{F,r+1}(M).$$ 
 

(ii) Let $4[\sqrt r+\sqrt{8\log(rn)}]^2\log(r)\le l\le n$ 
and apply Alg. \ref{alg1} with an SRHT or SRFT sketch matrix  $H$. Then 
(cf. \cite{T11}, \cite[Thm. 11.2]{HMT11}) 
$$|||M-XY|||\le\sqrt{1+7n/l}~~
\tilde\sigma_{r+1}(M)~
{\rm with~a~probability}~1-O(1/r).$$ 
\end{theorem}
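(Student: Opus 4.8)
The plan is to derive both claims from the deterministic error bound (\ref{eqerrnrm}) of Theorem \ref{thpert1} (that is, \cite[Theorem 9.1]{HMT11}) combined with probabilistic estimates on the test matrix $H$; concretely, claim (i) reduces to Gaussian-matrix moment identities of the kind collected in Appendix \ref{snrmg}, while claim (ii) reduces to a near-isometry property of subsampled Hadamard/Fourier transforms. This is the route of \cite{HMT11} and \cite{T11}, and the role of this theorem in the body of the paper is only to record these bounds in a convenient form. Throughout I write $C_1 = V_1^*H$ and $C_2 = V_2^*H$ as in (\ref{eqc12}), recall from Remark \ref{realg1} that $XY = MH(MH)^+M$ so that Theorem \ref{thpert1} applies whenever $\rank(C_1)=r$, and recall that $|||\Sigma_2|||$ equals $\sigma_{r+1}(M)$ in the spectral norm and $\sigma_{F,r+1}(M)$ in the Frobenius norm.

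For claim (i), since $V_1$ and $V_2$ have orthonormal columns, Lemma \ref{lepr3} makes $C_1$ and $C_2$ independent Gaussian matrices of sizes $r\times l$ and $(n-r)\times l$, and $C_1$ has full rank $r$ with probability $1$ by Theorem \ref{thrnd}. First I would condition on $C_1$ and take the expectation over $C_2$, using the standard identities $\mathbb E\|SGT\|_F^2 = \|S\|_F^2\|T\|_F^2$ and $\mathbb E\|SGT\|_2 \le \|S\|_2\|T\|_F + \|S\|_F\|T\|_2$ for constant $S,T$ and Gaussian $G$ (cf. \cite[Section 10]{HMT11}) with $S=\Sigma_2$, $G=C_2$, $T=C_1^+$; this gives $\mathbb E_{C_2}\|\Sigma_2 C_2 C_1^+\|_F^2 = \sigma_{F,r+1}^2(M)\,\|C_1^+\|_F^2$ and $\mathbb E_{C_2}\|\Sigma_2 C_2 C_1^+\|_2 \le \sigma_{r+1}(M)\,\|C_1^+\|_F + \sigma_{F,r+1}(M)\,\|C_1^+\|_2$. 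Then I would take the expectation over $C_1$ via the pseudo-inverse moment estimates for an $r\times l$ Gaussian matrix with $l\ge r+2$: the inverse-Wishart identity $\mathbb E\|C_1^+\|_F^2 = r/(l-r-1)$, hence $\mathbb E\|C_1^+\|_F \le \sqrt{r/(l-r-1)}$ by Jensen's inequality, together with $\mathbb E\|C_1^+\|_2 \le e\sqrt l/(l-r)$ from claim (iii) of Theorem \ref{thsiguna}. Substituting into (\ref{eqerrnrm}) under the Frobenius norm gives the first inequality at once; for the spectral bound one first uses the subadditivity $\sqrt{a+b}\le\sqrt a+\sqrt b$ to pass from $\|M-XY\|_2^2 \le \sigma_{r+1}^2(M) + \|\Sigma_2 C_2 C_1^+\|_2^2$ to $\|M-XY\|_2 \le \sigma_{r+1}(M) + \|\Sigma_2 C_2 C_1^+\|_2$ and then takes expectations as above. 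This reproduces \cite[Theorems 10.5 and 10.6]{HMT11}.

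For claim (ii), the deterministic bound (\ref{eqerrnrm}) gives $|||M-XY|||^2 \le \tilde\sigma_{r+1}^2(M)\bigl(1 + \|C_2\|_2^2\,\|C_1^+\|_2^2\bigr)$, using $|||\Sigma_2 C_2 C_1^+|||\le|||\Sigma_2|||\,\|C_2\|_2\,\|C_1^+\|_2$ and $|||\Sigma_2|||\le\tilde\sigma_{r+1}(M)$. For the $n\times l$ SRHT/SRFT matrix $H$, normalized so that $\|H\|_2=\sqrt{n/l}$, one has $\|C_2\|_2 \le \|H\|_2 = \sqrt{n/l}$, so the only nontrivial point is a lower bound on $\sigma_r(V_1^*H) = 1/\|C_1^+\|_2$. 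That is exactly the subsampled-transform concentration estimate of \cite[Theorem 11.1]{HMT11} and \cite{T11}: under the hypothesis $l \ge 4[\sqrt r + \sqrt{8\log(rn)}\,]^2\log r$ the matrix $V_1^*H$ is well conditioned with probability $1-O(1/r)$, and bookkeeping the constants as in \cite[Theorem 11.2]{HMT11} yields the factor $\sqrt{1+7n/l}$ on that event.

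The step I expect to be the genuine obstacle is the concentration estimate behind claim (ii). Unlike the Gaussian case, where orthogonal invariance (Lemma \ref{lepr3}) turns $C_1$ and $C_2$ into honest Gaussians with explicitly computable moments, the lower bound on $\sigma_r(V_1^*H)$ for a subsampled Hadamard/Fourier $H$ rests on a matrix-Chernoff/Rudelson-type argument for sums of random rank-one terms, and it is precisely this argument that forces the logarithmic slack in the hypothesis on $l$. Since it is carried out in full in \cite{T11} and \cite[Section 11]{HMT11}, I would simply cite it; the work in claim (i) is otherwise routine once the inverse-Wishart moment $\mathbb E\|C_1^+\|_F^2 = r/(l-r-1)$ is in hand, which is itself standard (cf. \cite[Section 10]{HMT11}).
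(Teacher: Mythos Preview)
Your proposal is correct and faithfully follows the arguments of \cite[Sections 10 and 11]{HMT11} and \cite{T11}. Note, however, that the paper itself does not prove Theorem \ref{therr51} at all: it is placed in Appendix \ref{slmmalg1} as a recollection of known bounds, with the ``cf.'' citations in the statement serving in lieu of a proof; your sketch therefore supplies considerably more detail than the paper does.
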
 
 
\cite[Thm 4.3]{TYUC17} shows that the output {\em LRA} $XY$ of Alg. \ref{alg01} applied with 
 Gaussian sketch matrices $F$ and $H$ 
satisfies\footnote{In words,
the expected output error norm $\mathbb E||M-XY||_F$
 is within a factor of $\Big(\frac{kl}{(k-l)(l-r)}\Big)^{1/2}$ from its minimum value
$\sigma_{F,r+1}(M)$; this factor is just 2 for 
$k=2l=4r$.}
 \begin{equation}\label{eqtyuc}  
 \mathbb E||M-XY||_F^2\le 
 \frac{kl}{(k-l)(l-r)}\sigma_{F,r+1}^2(M)~{\rm if}~k>l>r.
\end{equation}   
 \begin{remark}\label{recwtyuc}
         Clarkson and Woodruff   
  prove in  \cite{CW09}  that Alg. \ref{alg01} reaches the bound 
$\sigma_{r+1}(M)$  
  within a factor of $1+\epsilon$ 
  {\em whp} if the sketch matrices 
  $F\in \mathcal G^{k\times m}$ and 
  $H\in \mathcal G^{n\times l}$ are Rademacher's matrices
  (filled with iid random variables, each equal to 1 or $-1$ with probability 1/2) and if $k$ and $l$ are sufficiently large, having order of  
  $r/\epsilon$ and $r/\epsilon^2$ for small 
  $\epsilon$, respectively,
although
 {\em LRA} is not practical  
  where the numbers $k$ and $l$ of row and column samples are  
    large (cf. \cite[Sec.
    1.7.3]{TYUC17}).
\end{remark}


\section{Small families of hard inputs for sublinear cost {\em LRA}}\label{shrdin}

  Any sublinear cost {\em LRA} algorithm
  fails on the following small families of  inputs.
  
\begin{example}\label{exdlt} 
 Let  $\Delta_{i,j}$ denote an $m\times n$ matrix
 of rank 1  filled with 0s except for its $(i,j)$th entry filled with 1. 
Include the $m\times n$ null matrix $O_{m,n}$
filled with 0s  into the family of these $mn$ matrices.
  If an  {\em LRA} algorithm  does not involve the $(i,j)$th  
entry of its input matrix  for some pair of $i$ and $j$, as is the case for any sublinear cost algorithm,
 then it outputs the same approximation 
of the matrices $\Delta_{i,j}$ and $O_{m,n}$,
with an undetected  error at least 1/2.
Arrive at the same conclusion by applying the same argument to the
set of $mn+1$ small-norm perturbations of 
the matrices of the above family and to the                                                                                                                                   
 $mn+1$ sums  
of the latter matrices with  any
   fixed $m\times n$ matrix of low rank.
Finally, the same argument shows that 
a matrix norm estimator 
fails to produce even  reasonably close estimates for the norms of the matrices of the same $mn+1$ families
unless that  estimator 
involves all entries of an input matrix. 
\end{example}

This example can be extended to randomized  algorithms --   if for some pair $(i,j)$ an {\em LRA} algorithm or matrix norm estimator misses the $(i,j)$th  entry of an input matrix  with a  probability  $p$, then this algorithm
or estimator
fails with the probability $p$ on the above matrix families.


\section{Generation of two families of Ultrasparse sketch matrices}\label{spreprmlt}





 
We define  two families
of Ultrasparse sketch matrices 
  by  means of abridging 
    the classical 
recursive processes of the generation  of $n\times n$ 
SRHT and SRFT matrices
  for $n=2^t$. These matrices are 
  obtained from   the  $n\times n$ dense
matrices $H_n$ of {\em Walsh-Hadamard transform}
(cf. \cite[Sec. 3.1]{M11})
and $F_n$ of {\em discrete Fourier transform (DFT)} at $n$ points 
(cf. \cite[Sec. 2.3]{P01}),
respectively. Recursive representation 
 in $t$ recursive steps enables multiplication of the matrices $H_n$ 
 and $F_n$ by a vector by using $2tn$   additions and subtractions and
 $O(tn)$ flops, respectively.
 
 We end these processes in $d$ recursive steps for a
 fixed recursion depth $d$, $1\le d\le t$,
 and obtain the $d$-{\em abridged Hadamard (AH)  and Fourier
(AF)} matrices $H_{d,d}$
  and $F_{d,d}$, respectively, such that
  $H_{t,t}=H_n$ and  $F_{t,t}=F_n$. Namely,
   write $H_{d,0}:=F_{d,0}:=I_{n/2^d}$,
  let  $\omega_{s}:=\exp(2\pi \sqrt {-1}/s)$, denote  a primitive $s$-th root  of 1, and 
specify two recursive processes as follows: 
 

\begin{equation}\label{eqrfd}
H_{d,0}:=I_{n/2^d},~
H_{d,i+1}:=\begin{pmatrix}
H_{d,i} & H_{d,i} \\
H_{d,i} & -H_{d,i}
  \end{pmatrix}
  ~{\rm for}~i=0,1,\dots,d-1, 
\end{equation}
\begin{equation}\label{eqfd}
F_{d,i+1}:=
\widehat P_{i+1}
\begin{pmatrix}F_{d,i}&~~F_{d,i}\\ 
F_{d,i}\widehat D_{i+1}&-F_{d,i}\widehat D_{i+1}\end{pmatrix},~
\widehat D_{i+1}:=\diag\Big (\omega_{2^{i+1}}^{j}\Big)_{j=0}^{2^i-1},~i=0,1,\dots,d-1.
\end{equation}
Here $\widehat P_{i}$ denotes the
$2^i\times 2^i$ matrix of odd/even permutations 
 such that 
$\widehat P_{i}{\bf u}={\bf v}$, ${\bf u}=(u_j)_{j=0}^{2^{i}-1}$, 
${\bf v}=(v_j)_{j=0}^{2^{i}-1}$, $v_j=u_{2j}$, $v_{j+2^{i-1}}=u_{2j+1}$, 
$j=0,1,\ldots,2^{i-1}-1$.\footnote{For $d=t$ this is a  decimation in frequency (DIF) radix-2 representation of FFT. 
Transposition turns it into the 
decimation 
in time (DIT) radix-2 representation of FFT.}

For a fixed pair of $d$ and $i$, 
 the matrix  
 $H_{d,i}$ (resp. $F_{d,i}$) 
has orthonormal columns (and hence is orthogonal (resp. unitary) since it is  a square matrix) 
up to scaling and
 has $2^d$ nonzero entries 
in every row and  column.  
Now make up  sketch matrices $F$
and  $H$  of  $k\times m$
and  $n\times l$ submatrices of 
$F_{d,d}$ and $H_{d,d}$, respectively. Then
in view  of sparseness of $F_{d,d}$ or $H_{d,d}$, 
we can compute the products 
$FM$ and $MH$ by using 
$O(kn2^d)$ and $O(lm2^d)$ flops, respectively. 


Define the 
$d$--{\em Abridged Scaled and Permuted 
 Hadamard (ASPH)} matrices, $PDH_{d,d}$, and 
$d$--{\em Abridged Scaled and Permuted 
Fourier  (ASPF)} $n\times n$
 matrices, $PD'F_{d,d}$, where $P$ is a
 random 
 sampling 
 matrix, $D$ is the matrix of  
  Rademacher's  or another random integer diagonal scaling, and $D'$ is a matrix
 of random unitary diagonal scaling.
Likewise  define the families of
ASH, ASF, APH, and APF matrices, 
 $DH_{n,d}$, $DF_{n,d}$, $H_{n,d}P$, and  $F_{n,d}P$, respectively.
Each random permutation or scaling  
 contributes up to $n$ random parameters.  
We can involve more random parameters by applying random permutation and scaling 
also to some or all
 intermediate matrices $H_{d,i}$
and  $F_{d,i}$ for $i=0,1,\dots,d$.

The first $k$ rows for
 $r\le k\le n$  or first $l$ columns for  
 $r\le l\le n$ of $H_{d,d}$ and 
 $F_{d,d}$ form a
 $d$-abridged Hadamard or Fourier sketch matrix, which turns into
a SRHT or SRFT matrix,
respectively, for $d=t$.
  For $k$ and $l$   of order $r\log(r)$ 
   Alg. \ref{alg1} with a SRHT or SRFT sketch matrix outputs  whp accurate {\em LRA}
 of any matrix $M$ admitting {\em LRA} 
 (see \cite[Sec. 11]{HMT11}), 
  but in our tests the output was  
 accurate even
with Ultrasparse abridged SRHT or SRFT sketch matrices,  even when we computed them just in three recursive steps and added a couple of
abridged matrices of random permutation (see Sec. \ref{srndsmpl}).

   
\medskip
\medskip

\noindent 




\end{document}